\newcommand{\skipline}{\vspace{0.6cm}}
\newcommand{\N}{\mathbb{N}}
\newcommand{\R}{\mathbb{R}}
\theoremstyle{plain}
\newtheorem{theorem}{Theorem}
\newtheorem{lemma}{Lemma}
\newtheorem{remark}{Remark}
\newtheorem{open}{Open problem}
\newcommand{\EnsU}{L^\infty(0,T)}
\newcommand{\EnsV}{H^{1/4}(0,T)}
\newcommand{\diverg}{\mathop{\text{div}}\nolimits}
\title{Fast global null controllability for a viscous Burgers' equation 
despite the presence of a boundary layer}
\author{Frédéric Marbach
\footnote{Email: marbach@ann.jussieu.fr. Address: 
Laboratoire Jacques-Louis Lions, Université Pierre et Marie Curie, 
Institut Universitaire de France, 4, Place Jussieu, 75252 Paris Cedex, France.
Work partially supported by the ERC advanced grant 266907 (CPDENL) of the 
7th Research Framework Programme (FP7)}}
\begin{document}

\maketitle

\begin{abstract}
In this work, we are interested in the small time global null controllability 
for the viscous Burgers' equation $y_t - y_{xx} + y y_x = u(t)$ on the line 
segment $[0,1]$. The second-hand side is a scalar control playing a role similar 
to that of a pressure. We set $y(t,1) = 0$ and restrict ourselves to using only 
two controls (namely the interior one $u(t)$ and the boundary one $y(t,0)$).
In this setting, we show that small time global null controllability still holds 
by taking advantage of both hyperbolic and parabolic behaviors of our system. We 
use the Cole-Hopf transform and Fourier series to derive precise estimates for 
the creation and the dissipation of a boundary layer.
\end{abstract}

\section{Introduction}

\subsection{Description of the system and our main result}

Let $T > 0$ be a positive time, possibly small. We consider the line segment 
$x \in [0,1]$ and the following one-dimensional viscous Burgers' controlled 
system:
\begin{equation}
	\label{burgers2}
	\left\{
	\begin{array}{rll}
		y_t + y y_x - y_{xx} & = u(t)
		& \quad \mbox{in} \quad (0,T) \times (0,1), \\
		y(t, 0) & = v(t) &	\quad \mbox{in} \quad (0, T), \\
		y(t, 1) & = 0 &	\quad \mbox{in} \quad (0, T), \\
		y(0, x) & = y_0(x) & \quad \mbox{in} \quad (0,1).
	\end{array}
	\right.
\end{equation}
The scalar controls are $u \in L^2(0,T)$ and $v \in H^{1/4}(0,T)$. The 
second-hand side control term $u(\cdot)$ plays a role somewhat similar
to that of a pressure for multi-dimensional fluid systems. Unlike some other 
studies, our control term $u(\cdot)$ depends only on time and not on the space 
variable.

For any initial data $y_0 \in L^2(0,1)$ and any fixed controls in the 
appropriate spaces, it can be shown that system~(\ref{burgers2}) has a 
unique solution in the space 
$X = L^2((0,T); H^1(0,1)) \cap \mathcal{C}^0([0,T]; L^2(0,1))$.
This type of existence result relies on standard \textit{a priori} estimates
and the use of a fixed point theorem. Such techniques are described in
\cite{MR0259693}. One can also use a semi-group method as in~\cite{MR710486}.
Our main result is the following small time global null controllability theorem 
for system~(\ref{burgers2}):

\begin{theorem}
\label{thm.frederic}
Let $T > 0$ be any positive time and $y_0$ by any initial data in $L^2(0,1)$.
Then there exists a control pair $u \in \EnsU$ and $v \in \EnsV$ such that the
solution $y \in X$ to system~(\ref{burgers2}) is null at time $T$. That is to 
say, $y$ is such that $y(T, \cdot) \equiv 0$.
\end{theorem}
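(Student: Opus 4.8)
\emph{Proof proposal.} The strategy is to split $[0,T]$ into three stages — a \emph{hyperbolic} stage using large controls to transport $y_0$ out of the segment, a \emph{parabolic} stage dissipating the boundary layer that the first stage leaves near $x=1$, and a \emph{local} stage finishing the job — and to run the whole argument in the Cole--Hopf variable. First, by letting system~(\ref{burgers2}) evolve freely ($u\equiv v\equiv 0$) on an arbitrarily short initial time interval and using parabolic regularisation, we may assume that $y_0$ is smooth and bounded; this only costs a little of the available time. Then we set $y=-2\phi_x/\phi$; a direct computation turns system~(\ref{burgers2}) into
\begin{equation*}
  \phi_t-\phi_{xx}=-\tfrac{u(t)}{2}\,x\,\phi \ \text{ in }(0,T)\times(0,1),\qquad \phi_x(t,0)=-\tfrac{v(t)}{2}\,\phi(t,0),\qquad \phi_x(t,1)=0,
\end{equation*}
with $\phi(0,\cdot)=\exp\!\big(-\tfrac12\int_0^{\,\cdot}y_0\big)>0$. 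By the maximum principle $\phi$ stays positive, the transform is then a bijection between the relevant solution classes, and $y(T,\cdot)\equiv 0$ is equivalent to $\phi(T,\cdot)$ being constant in $x$. Hence it suffices to steer $\phi$ to a spatially constant profile at time $T$, using $u$ as a linear-in-$x$ potential and $v$ as a Robin coefficient at $x=0$ — both \emph{bilinear} controls, which is why a frontal attack by Carleman estimates is not available and why the more rigid $\phi$-picture has to be exploited.

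\emph{Hyperbolic stage.} On $[0,\theta T]$ apply a large interior control (say $u\equiv M$, with $M$ to be fixed very large, depending on $T$ and $\|y_0\|_{L^\infty}$) together with a boundary control $v$ of comparable size, chosen so as to suppress any layer at $x=0$. In the $y$-picture this builds up a large bulk velocity sweeping to the right, so that $y_0$ is flushed out through $x=1$ on a time scale $\ll\theta T$; by time $\theta T$ the state is, up to a small remainder, a large constant in the bulk together with a thin boundary layer near $x=1$, of width $\sim 1/M$, across which $y$ drops to $0$ to meet $y(t,1)=0$. The merit of the $\phi$-picture is that $\phi$ merely solves a \emph{linear} parabolic problem: no shock can form, and one gets, via an explicit Fourier expansion, precise estimates for $\phi(\theta T,\cdot)$ — a quantitative description of the near-exponential layer profile and of the smallness of the remainder in terms of $M$. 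This is the ``creation of a boundary layer'' announced in the abstract.

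\emph{Parabolic and local stages.} On $[\theta T,(1-\theta)T]$ relax the bulk back towards $0$ — e.g. with $u$ negative of total mass opposite to the bulk value, and $v$ following the decreasing bulk velocity — so that transport is switched off well before the end of the stage and only the boundary layer near $x=1$ remains, left to diffuse. Again through Cole--Hopf, the explicit Fourier series for the resulting heat evolution quantifies the ``mass'' of the layer escaping through the Dirichlet endpoint $x=1$ and the residual interior perturbation it deposits; the conclusion of these estimates — the ``dissipation of a boundary layer'' of the abstract — is that, for the \emph{fixed} (possibly small) time $T$, one can guarantee $\|y((1-\theta)T,\cdot)\|_{L^2(0,1)}\le\varepsilon$ with $\varepsilon\to 0$ as $M\to\infty$; we fix $M$ large enough that $\varepsilon$ is below the threshold of the last stage. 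It then remains to drive a state of small $L^2$ norm to $0$ on $[(1-\theta)T,T]$: linearising the bilinear $\phi$-problem around a constant turns the boundary input $v$ into a Neumann control at $x=0$, and steering $\phi(T,\cdot)$ to a constant (i.e. killing its non-constant part) amounts to null-controlling a $1$D heat equation by a single boundary control, which is classical (moment method); a standard fixed-point argument upgrades this to local exact null controllability of the nonlinear problem near $0$. Concatenating the four pieces yields $u\in\EnsU$ and $v\in\EnsV$ with $y(T,\cdot)\equiv 0$, i.e. Theorem~\ref{thm.frederic}.

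\emph{Main obstacle.} The heart of the argument is the parabolic stage. A boundary layer of width $1/M$ has $L^2$ norm of order $\sqrt{M}$ and a spatial gradient of order $M^2$, and a crude estimate would suggest that relaxing it to a flat profile needs a time of order $1$, not $\le T$. The point is that the true relaxation time of such a layer is only of order $1/M^2\ll T$ once $M$ is large, and — this is where the precise Cole--Hopf / Fourier computation is indispensable — that the fraction of the layer re-injected into the interior, measured in $L^2$, can be sent to $0$ by letting $M\to\infty$, \emph{uniformly} for the given $T$. Controlling the very steep gradient of the layer, and hence the threat of shock formation in the presence of the nonlinearity, is exactly what forces the whole construction to be carried out on the linear equation for $\phi$ rather than directly on system~(\ref{burgers2}).
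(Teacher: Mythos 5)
Your overall architecture coincides with the paper's: flush the initial data to the right so that a large bulk plus a thin layer at $x=1$ is created, let the layer dissipate, and finish with local exact null controllability obtained through Cole--Hopf and the Fattorini--Russell moments method (this last step is essentially the paper's second proof of Theorem~\ref{thm.fursikov}; note the fixed-point upgrade you invoke is not needed, since with $u\equiv 0$ the transform linearises the problem exactly). However, two steps at the quantitative core of the argument are genuinely missing. First, your claim that with $u\equiv M$ the $\phi$-equation admits ``an explicit Fourier expansion'' giving ``precise estimates for $\phi(\theta T,\cdot)$'' does not hold as stated: the interior control produces the potential $\tfrac{M}{2}x\,\phi$ (linear in $x$, so the relevant eigenfunctions are Airy-type, not sines), and the boundary control $v$ enters only as a bilinear Robin coefficient at $x=0$, so you have specified neither how $v$ is chosen nor how the layer profile is actually estimated. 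The paper's way around this is essential to its proof: during the build-up it takes $\bar u=0$ (the bulk is created by the boundary control alone after the time rescaling), it \emph{prescribes the solution in the Cole--Hopf domain} of the explicitly solvable constant-potential system~(\ref{system.HC2}) (Dirichlet datum $1$ at $x=0$, Neumann at $x=1$), reads the control $\bar v$ off from that solution, and uses the comparison principle of Lemma~\ref{lemma.comparison} to get the a posteriori bounds $\bar v\le H$, $\bar y\le H$ needed to convert the $H^1$ estimate on $Z$ (Lemma~\ref{lemma.HC}) into the $L^2$ estimate of Lemma~\ref{lemma.settling}. Without a device of this kind your hyperbolic stage has no rigorous output.

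Second, the dissipation stage. Your heuristics (layer width $1/M$, relaxation time $1/M^{2}$, re-injected mass tending to $0$) are the right intuition, but you do not explain how the estimate survives the nonlinearity and the errors accumulated earlier, and this is where the paper's work lies. The push-down is performed in a time $o(\varepsilon)$ precisely so that the Grönwall estimate of Lemma~\ref{lemma.push} keeps the deviation from the residue of size $O(\varepsilon^{3})$ (a slow relaxation over a time of order $T$, as you propose, gives an exponentially bad constant $e^{H^2 T/2\varepsilon}$ and destroys the estimate); the comparison principle then sandwiches the state between $0$ and a small perturbation of the nonpositive residue $\Phi^\varepsilon$; and the passive stage is analysed through the \emph{exact} Cole--Hopf transform of the free nonlinear equation, where the sign condition $\Phi^\varepsilon\le 0$ yields $z\ge 1$ and hence $|\phi|\le 2|z_x|$ — this inequality is what turns the Fourier computation into the $L^2$ bound $O(\varepsilon^{1-\delta})$ of Lemmas~\ref{lemma.dissolution} and~\ref{lemma.dissolution2}, and the perturbation lemma moreover requires the $O(\varepsilon^{3})$ smallness and the preserved sign. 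None of these mechanisms (fast push-down, sign/sandwich structure, control of the division by $\phi$ in the backwards transform) appears in your proposal, so the central claim that the residual interior perturbation tends to $0$ uniformly for the fixed time $T$ remains unproved; the concatenation with Theorem~\ref{thm.fursikov}, as in the paper's proof of Theorem~\ref{thm.frederic2}, only works once this is supplied.
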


\subsection{An open-problem for Navier-Stokes as a motivation}

As a motivation for our study, let us introduce the following challenging open
problem. Take some smooth connected bounded domain $\Omega$ in $\R^2$ or $\R^3$.
Consider some open part $\Gamma$ of its boundary $\partial \Omega$. This is the
part of the boundary on which our control will act. We consider the following
Navier-Stokes system: 
\begin{equation}
	\label{navier.stokes}
	\left\{
	\begin{array}{rll}
		y_t - \Delta y + (y\cdot\nabla) y & = - \nabla p
		& \quad \mbox{in} \quad (0,T) \times \Omega, \\
		\diverg y & = 0 & \quad \mbox{in} \quad (0,T) \times \Omega, \\
		y & = 0  & \quad \mbox{on} 
		\quad (0,T) \times (\partial \Omega \setminus \Gamma) , \\
		y(0, \cdot) & = y_0(\cdot) & \quad \mbox{in} \quad \Omega.
	\end{array}
	\right.
\end{equation}
We consider this system as an underdetermined system. Our control will be some
appropriate trace of a solution on the controlled boundary $\Gamma$. 

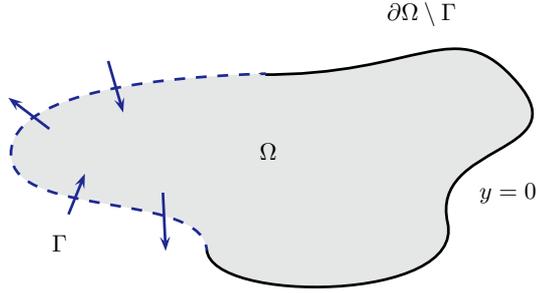
\begin{figure}[!ht]
\centering
\scalebox{0.9} 
{
\begin{pspicture}(0,-2.236719)(9.529062,2.2567186)
\definecolor{gris}{rgb}{0.8980392156862745,0.9058823529411765,0.9058823529411765}
\definecolor{bleu}{rgb}{0.11764705882352941,0.1607843137254902,0.5725490196078431}
\psbezier[linewidth=0.04,fillstyle=solid,fillcolor=gris](3.0554688,-1.4167187)(3.1154687,-2.2167187)(6.835469,-2.1967187)(6.5754685,-1.0167187)(6.315469,0.16328125)(8.4982395,-0.021577183)(7.595469,1.0432812)(6.6926975,2.1081395)(6.2,1.1367188)(3.8154688,1.1632812)
\psbezier[linewidth=0.04,linecolor=bleu,fillstyle=solid,fillcolor=gris,linestyle=dashed,dash=0.16cm 0.16cm](3.0554688,-1.4167187)(2.9754689,-0.5367187)(0.14229742,-0.9753041)(0.19546875,0.0232812)(0.2486401,1.0218666)(2.9354687,1.1432812)(3.92,1.1767187)
\psline[linewidth=0.04cm,linecolor=bleu,arrowsize=0.073cm 2.0,arrowlength=1.4,arrowinset=0.4]{->}(1.6154687,1.3632811)(1.8354688,0.6032812)
\psline[linewidth=0.04cm,linecolor=bleu,arrowsize=0.073cm 2.0,arrowlength=1.4,arrowinset=0.4]{->}(1.0354688,-0.8967188)(1.2754687,-0.2967188)
\psline[linewidth=0.04cm,linecolor=bleu,arrowsize=0.073cm 2.0,arrowlength=1.4,arrowinset=0.4]{->}(0.7554688,0.3632812)(0.15546876,0.82328117)
\psline[linewidth=0.04cm,linecolor=bleu,arrowsize=0.073cm 2.0,arrowlength=1.4,arrowinset=0.4]{->}(2.4154687,-0.5767188)(2.4554687,-1.4367187)
\usefont{T1}{ptm}{m}{n}
\rput(3.9545312,0.0332812){$\Omega$}
\usefont{T1}{ptm}{m}{n}
\rput(6.1945314,2.0532813){$\partial \Omega \setminus \Gamma$}
\usefont{T1}{ptm}{m}{n}
\rput(0.90453124,-1.3267188){$\Gamma$}
\usefont{T1}{ptm}{m}{n}
\rput(7.454531,-0.5867188){$y = 0$}
\end{pspicture} 
}
\caption{Setting of the Navier-Stokes control problem~(\ref{navier.stokes}).}
\end{figure}

\begin{open}
\label{open.NS}
Is system~(\ref{navier.stokes}) small time globally null controllable? That is 
to say, for any $T > 0$ and $y_0$ in some appropriate space, does there exist a 
trajectory of system~(\ref{navier.stokes}) such that $y(T, \cdot) \equiv 0$?
\end{open}

Many works have be done in this direction. Generally speaking, one can 
distinguish two approaches. First, one can think of the nonlinear term as a 
perturbation term and obtain the controllability by means of the Laplacian term. 
For instance, Fabre uses in~\cite{MR1418484} a truncation method for the
Navier-Stokes equation. In~\cite{MR1648554}, Lions and Zuazua use Galerkin
approximations for various fluid systems. 
Of course, this approach is very efficient for local results.
The most recent result concerning local controllability for 
system~(\ref{navier.stokes}) is the one contained in~\cite{MR2103189} by 
Fern\'andez-Cara, Guerrero, Imanuvilov and Puel. Their proof uses Carleman
estimates. 

The other approach goes the other way around. Indeed, in finite dimension, it 
is known that if $\dot{y} = F(y) + Bu$ where $F$ is quadratic is controllable, 
then $\dot{y} = F(y) + Ay + Bu$ is controllable too 
(see~\cite[Theorem 3.8]{MR2827892}). Likewise, for fluid systems, trying to get 
a fast controllability result implies to work at high Reynolds number (ie. with 
big fluid velocities, or low viscosity) inside the domain. Therefore, inertial 
forces prevail and the fluid system behaves like its null viscosity hyperbolic 
limit system. In our case, we expect to deduce results for Navier-Stokes from 
the Euler sytem. For Euler, global controllability has been shown 
in~\cite{MR1233425} by Coron for the 2D case (see also~\cite{MR1380673}) and
by Glass for the 3D case in~\cite{MR1745685}. Their proofs rely on the return 
method introduced by Coron in~\cite{MR1164379} (see 
also~\cite[Chapter 6]{MR2302744}). For Navier-Stokes, things get harder. 
In~\cite{MR1470445}, Coron and Fursikov show a global controllability result in 
the case of a 2D manifold without boundary. In~\cite{MR1728643}, Fursikov and 
Imanuvilov show a global exact controllability result for 3D Navier-Stokes with 
a control acting on the whole boundary (ie. $\Gamma = \partial \Omega$).

Other approaches exist. Let us mention for instance the work ~\cite{MR2127744}, 
where Agrachev and Sarychev control Navier-Stokes equations by means of low modes.
They use methods of differential geometric~/~Lie algebraic control theory for finite 
dimensional control systems.

\skipline

The main difficulty of Open problem 1 is the behavior of the system near
$\partial \Omega \setminus \Gamma$. Indeed, although inertial forces prevail
inside the domain, viscous forces play a crucial role near the uncontrolled
boundary, and give rise to a boundary layer. An example of such a
phenomenon can be found in~\cite{MR1393067} where Coron derives an approximate
controllability result and highlights the creation of a boundary residue.
Hence, the key question is whether one can handle such a boundary layer
by means of the control.

Some authors have tried to study simplified geometries for
Open problem 1. In~\cite{MR2560050}, Chapouly studies a Navier-Stokes equation on 
a rectangle with Navier-slip boundary conditions on the uncontrolled part of the
boundary. She obtains small time global null controllability. In 
~\cite{MR2269867} and~\cite{MR2994698}, Guerrero, Imanuvilov and Puel 
prove approximate controllability for a  Navier-Stokes system in a square 
(resp. in a cube) where one side (resp. one face) is not controlled and has zero 
Dirichlet boundary condition.

\skipline

Burgers' equation has been extensively used as a toy model to investigate 
properties of more complex systems in a rather simple setting. This equation
was introduced in the seminal paper~\cite{MR0001147} by Burgers. Both from a 
theoretical and a numerical point of view, it already exhibits some key 
behaviors (such as interaction between the non-linearity and the smoothing 
effect). Therefore, our Theorem~\ref{thm.frederic} can be seen as an example 
for fast global null controllability despite the presence of a Dirichlet boundary 
layer. Moreover, despite the simplicity of Burgers' equation, the analogy 
between systems~(\ref{burgers2}) and~(\ref{navier.stokes}) is quite striking.
We can interpret our scalar control $u(t)$ as some one-dimensional counterpart
of a pressure gradient for 2D or 3D.

\subsection{Previous works concerning Burgers' controllability}

Concerning the controllability of the inviscid Burgers' equation, some works
have be carried out. In~\cite{MR1616586}, Ancona and Marson describe the set of
attainable states in a pointwise way for the Burgers' equation on the half-line 
$x \geq 0$ with only one boundary control at $x = 0$. In~\cite{MR1612027}, 
Horsin describes the set of attainable states for a Burgers' equation on a line
segment with two boundary controls. Thorough studies are also carried out 
in~\cite{adimurthi} by Adimurthi et al. In~\cite{perrollaz}, Perrollaz studies the
controllability of the inviscid Burgers' equation in the context of entropy 
solutions with the additional control~$u(\cdot)$.

\skipline

Let us recall known results concerning the controllability of the
viscous Burgers' equation. We start with some positive results.

First, Fursikov and Imanuvilov have shown in~\cite{MR1406566} a small time 
local controllability result. It concerns local controllability in the vicinity
of trajectories of system~(\ref{burgers2}) and it only requires one boundary
control (either $y(t,0)$ or $y(t,1)$). Their proof relies on Carleman estimates 
for the parabolic problem obtained by seeing the non-linear term $yy_x$ as a 
small forcing term.

Global controllability towards steady states of system~(\ref{burgers2}) is
possible in large time both with one or two boundary controls. Such studies have
be carried out by Fursikov and Imanuvilov in~\cite{MR1348646} for large
time global controllability towards all steady states, and by Coron in
\cite{MR2376661} for global null-controllability in bounded time (ie. bounded 
with respect to the initial data).

When three scalar controls (namely $u(t)$, $y(t,0)$ and $y(t,1)$) are used,
Chapouly has shown in~\cite{MR2516179} that the system is small time 
exactly controllable to the trajectories. Her proof relies on the return method
and on the fact that the corresponding inviscid Burgers' system is small time 
exactly controllable (see~\cite[Chapter 6]{MR2302744} for other examples of this
method applied to Euler or Navier-Stokes).

\skipline

Some negative results have also been obtained.

In the context of only one boundary control $y(t,1)$, first obstructions where 
obtained by Diaz in~\cite{MR1364638}. He gives a restriction for the set of 
attainable states. Indeed, they must lie under some limit state corresponding
to an infinite boundary control $y(t,1) = + \infty$.

Still with only one boundary control, Fern\'andez-Cara and Guerrero 
derived an asymptotic of the minimal null-controllability time $T(r)$ for 
initial states of $H^1$ norm lower than $r$ (see~\cite{MR2311198}). This shows
that the system is not small-time controllable.

Guerrero and Imanuvilov have shown negative results in~\cite{MR2371111}
when two boundary controls $y(t,0)$ and $y(t,1)$ are used. They prove that
neither small time null controllability nor bounded time global 
controllability hold. Hence, controlling the whole boundary does not 
provide better controllability.

\subsection{Strategy for steering the system towards the null state}

In view of these results, it seems that the pressure-like control $u(t)$
introduced by Chapouly is the key to obtaining small time global controllability 
results. In order to take advantage of both hyperbolic and parabolic behaviors 
of system~(\ref{burgers2}), our strategy consists in splitting the motion in 
three stages:

\skipline
\textbf{Hyperbolic stage}: Fast and approximate control towards the null state. 
During this very short stage $t \in [0, \varepsilon T]$ where $0 < \varepsilon \ll 1$, 
the systems behaves like the corresponding hyperbolic one, as the viscous term
does not have enough time to act. This hyperbolic system is small time null controllable.
During this first stage, we will use both $u(\cdot)$
and $v(\cdot)$ to try to get close to the null state, 
except for a boundary layer at $x = 1$.

\textbf{Passive stage}: Waiting. At the end of the first stage, we reach a state
whose size is hard to estimate due to the presence of a boundary layer. 
During this stage, we use null controls $v(t) = u(t) = 0$. Regularization 
properties of the viscous Burgers equation dissipate the boundary layer and the 
size of $y(t, \cdot)$ decreases. We show that it tends to zero in $L^2(0,1)$ when
$\varepsilon \rightarrow 0$. This is a crucial stage as is enables us to get rid of
the boundary residue. It seems to be a new idea and could also be applied for 
other boundary layers created when trying to get fast global controllability results.

\textbf{Parabolic stage}: Local exact controllability in the vicinity of zero.
After the two first stages, we succeed in getting very close to the null state.
The non-linear term becomes very small compared to the viscous one, and the
system now behaves like a parabolic one. We use a small time local exact 
controllability result to steer the system exactly to zero. 
During this last stage, we only need the control $v(\cdot)$.

\skipline

Most of the work to be done consists in deriving precise estimates for the creation
and the dissipation of the boundary layer. We will use the Cole-Hopf transform
(introduced in~\cite{MR0042889} and~\cite{MR0047234}) and Fourier series to overcome this difficulty. 
First, we will investigate the hyperbolic limit system (see Section \ref{section.limite}). 
Then we will derive estimates for the creation of the boundary layer during our
hyperbolic stage (see Section \ref{section.hyperbolique}) and estimates for its dissipation during
the passive stage (see Section \ref{section.passive}). 
This will achieve the proof of a small time global approximate null controllability result
for our system~(\ref{burgers2}). In Section \ref{section.parabolic}, we will
explain the parabolic stage and the local exact controllability.

\subsection{A comparison lemma for controlled Burgers' systems}

Throughout our work, we will make an extensive use of the following comparison
lemma for our Burgers' system, in order to derive precise estimates. 
When the viscosity is null, this comparison principle still holds for entropy
solutions (as they are obtained as a limit of low viscosity solutions).

\begin{lemma}
\label{lemma.comparison}
Let $T, \nu > 0$ and consider $y_0, \hat{y}_0 \in L^2(0,1)$, 
$u, \hat{u} \in L^2(0,T)$, $v_0, \hat{v}_0, v_1, \hat{v}_1, \in H^{1/4}(0,T)$. 
Assume these data satisfy the following conditions:
\begin{equation*}
y_0 \leq \hat{y}_0 \quad \textrm{and} \quad
u \leq \hat{u} \quad \textrm{and} \quad
v_0 \leq \hat{v}_0 \quad \textrm{and} \quad
v_1 \leq \hat{v}_1.
\end{equation*}
Consider the following system (which is a generalized version of system 
(\ref{burgers2}):
\begin{equation}
	\label{burgers.3K}
	\left\{
	\begin{array}{rll}
		y_t + y y_x - \nu y_{xx} & = u(t)
		& \quad \mbox{in} \quad (0,T) \times (0,1), \\
		y(t, 0) & = v_0(t) & \quad \mbox{in} \quad (0,T), \\
		y(t, 1) & = v_1(t) & \quad \mbox{in} \quad (0,T), \\		
		y(0, x) & = y_0(x) & \quad \mbox{in} \quad (0,1).
	\end{array}
	\right.
\end{equation}
Then the associated solutions $y, \hat{y} \in X$ to system 
(\ref{burgers.3K}) are such that:
\begin{equation*}
y \leq \hat{y} \quad \textrm{on} \quad (0,T) \times (0,1).
\end{equation*}
\end{lemma}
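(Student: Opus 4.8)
The plan is to prove this by a classical maximum-principle argument applied to the difference $w := y - \hat{y}$, after reducing the analytic difficulties to standard parabolic regularity. First I would set $w = y - \hat{y}$, which satisfies $w(0,\cdot) = y_0 - \hat{y}_0 \leq 0$, $w(t,0) = v_0 - \hat{v}_0 \leq 0$, $w(t,1) = v_1 - \hat{v}_1 \leq 0$, and the equation
\begin{equation*}
w_t - \nu w_{xx} + \tfrac{1}{2}\big((y + \hat{y}) w\big)_x = u - \hat{u} \leq 0 ,
\end{equation*}
obtained by writing $y y_x - \hat{y}\hat{y}_x = \tfrac12 (y^2 - \hat{y}^2)_x = \tfrac12 ((y+\hat y) w)_x$. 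Thus $w$ is a subsolution (with nonpositive forcing, nonpositive boundary data and nonpositive initial data) of a linear parabolic equation with a transport term whose coefficient $a := \tfrac12(y+\hat{y})$ belongs to $L^2((0,T);H^1(0,1)) \cap \mathcal{C}^0([0,T];L^2(0,1))$, hence in particular to $L^2((0,T)\times(0,1))$ with $a_x \in L^2$. The goal is then $w \leq 0$ a.e.

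The cleanest way to conclude is the Stampacchia truncation method: test the equation for $w$ against $w^+ := \max(w,0)$, which is an admissible test function since $w^+$ vanishes on the parabolic boundary and lies in $L^2((0,T);H^1_0(0,1))$. One obtains
\begin{equation*}
\frac{1}{2}\frac{d}{dt}\int_0^1 |w^+|^2\,dx + \nu \int_0^1 |w^+_x|^2\,dx
= \int_0^1 (u-\hat u)\, w^+\,dx - \frac{1}{2}\int_0^1 \big((y+\hat y)w\big)_x w^+\,dx .
\end{equation*}
The first term on the right is $\leq 0$. For the transport term, on the set $\{w>0\}$ one has $w = w^+$ and $w_x = w^+_x$, so $\int_0^1 (a w)_x w^+ = \int_0^1 (a w^+)_x w^+ = \int_0^1 a_x |w^+|^2 + \int_0^1 a\, w^+ w^+_x = \int_0^1 a_x|w^+|^2 + \tfrac12\int_0^1 a\,(|w^+|^2)_x = \tfrac12\int_0^1 a_x|w^+|^2$ after integrating by parts and using that $w^+$ vanishes at $x=0,1$. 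This yields $\frac{d}{dt}\int_0^1 |w^+|^2 \leq \| a_x(t,\cdot)\|_{L^1}\, \| w^+(t,\cdot)\|_{L^\infty}^2$; combining with the $\nu\|w^+_x\|^2$ term via the one-dimensional Gagliardo–Nirenberg inequality $\|f\|_{L^\infty}^2 \leq C\|f\|_{L^2}\|f_x\|_{L^2}$ (valid for $f\in H^1_0(0,1)$) and Young's inequality to absorb the gradient term, one arrives at a differential inequality $\frac{d}{dt}\int_0^1 |w^+|^2 \leq C(t) \int_0^1 |w^+|^2$ with $C \in L^1(0,T)$. Since $\int_0^1 |w^+(0,\cdot)|^2 = 0$, Grönwall's lemma gives $w^+ \equiv 0$, i.e.\ $y \leq \hat y$.

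The main obstacle is purely technical: justifying that $w^+$ is a legitimate test function and that the integrations by parts above are valid given only the regularity $y,\hat y \in X$ — in particular that the coefficient $a_x$ is merely in $L^2$ in space, so the term $\int a_x |w^+|^2$ must be controlled in $L^\infty$ rather than $L^2$ in space, which is exactly why the Gagliardo–Nirenberg step is needed. This can be handled by a standard density/approximation argument (regularizing the data, so that solutions are smooth and all manipulations are classical, then passing to the limit using the continuous dependence of solutions on the data in $X$), or by invoking the parabolic comparison theory already available for equations of the form $w_t - \nu w_{xx} + b\cdot w_x + c\, w = f$ with the present regularity of coefficients. For $\nu = 0$ the inequality is part of the standard Kruzhkov theory of entropy solutions and follows by passing to the limit $\nu \to 0^+$ in the viscous comparison just established, as indicated in the statement.
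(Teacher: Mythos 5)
Your argument is correct and follows essentially the same route as the paper: an $L^2$-energy estimate on the sign-definite part of the difference ($w^+$ for you, the negative part $\delta$ of $\hat y - y$ in the paper), dropping the nonpositive forcing term, and closing with Grönwall since the initial value vanishes. The only cosmetic difference is in the transport term, where you integrate by parts once more and invoke Gagliardo--Nirenberg on $w^+$ (using $\partial_x(y+\hat y)\in L^2$), while the paper bounds $\int \delta\,(y+\hat y)\,\delta_x$ directly by Young's inequality using $\|y(t,\cdot)+\hat y(t,\cdot)\|_{L^\infty}^2\in L^1(0,T)$; both hinge on the same regularity $y,\hat y\in L^2((0,T);H^1(0,1))$.
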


One can find many comparison results in the litterature (see for instance the
book~\cite{MR2356201} and the references therein). However we give the proof
of Lemma \ref{lemma.comparison} both for the sake of completeness and because
with have not found this precise version anywhere.

\begin{proof}
We introduce $w = \hat{y} - y$. Thus, $w \in X$ is a solution to the system:
\begin{equation*}
	\left\{
	\begin{array}{rll}
		w_t - w_{xx} & = (\hat{u} - u) - \frac{1}{2}( w \hat{y} + w y)_x
		& \quad \mbox{in} \quad (0,T) \times (0,1), \\
		w(t, 0) & = \hat{v}_0(t) - v_0(t) & \quad \mbox{in} \quad (0,T), \\
		w(t, 1) & = \hat{v}_1(t) - v_1(t)  &	\quad \mbox{in} \quad (0,T), \\
		w(0, x) & = \hat{y}_0(x)- y_0(x) & \quad \mbox{in} \quad (0,1).
	\end{array}
	\right.
\end{equation*}
We want to study the negative part of $w$: $\delta = \min (w, 0)$. Hence, 
$\delta(t, 0) = \delta(t, 1) = 0$. Now we multiply the evolution equation by 
$\delta \leq 0$ and integrate by parts for $x \in [0,1]$ to get a $L^2$-energy 
estimate for $\delta$:
\begin{eqnarray*}
 \frac{1}{2} \frac{d}{dt} \int_0^1 \delta^2 + \nu \int_0^1 \delta_x^2
 & = & (\hat{u} - u)\int_0^1 \delta + \frac{1}{2} \int_0^1 \delta (\hat{y} + y)\delta_x  \\
 & \leq & \frac{\nu}{4} \int_0^1 \delta_x^2 + \frac{1}{4\nu} \int_0^1 \delta^2 (\hat{y} + y)^2 \\
 & \leq & \frac{\nu}{4} \int_0^1 \delta_x^2 + 
 \frac{1}{4\nu} \| \hat{y}(t,\cdot) + y(t, \cdot) \|^2_{\infty} \cdot \int_0^1 \delta^2.
\end{eqnarray*}
Thus, we can incorporate the first term of the right-hand side in the left-hand 
side:
\begin{equation*}
 \frac{1}{2} \frac{d}{dt} \int_0^1 \delta^2 
 \leq 
 \frac{1}{4\nu} \| \hat{y}(t,\cdot) + y(t, \cdot) \|^2_{\infty} \cdot \int_0^1 \delta^2.
\end{equation*}
Since $y, \hat{y} \in L^2\left((0,T);H^1(0,1)\right)$, we have that:
\begin{equation*}
 t \mapsto \| \hat{y}(t,\cdot) + y(t, \cdot) \|^2_{\infty}
 \quad \textrm{belongs to} \quad L^1(0,T).
\end{equation*}
Hence we can use Grönwall's lemma. Since $\delta(0,\cdot) \equiv 0$, we deduce 
that $\delta \equiv 0$ and $y \leq \hat{y}$.
\end{proof}

\section{Analysis of the hyperbolic limit system}
\label{section.limite} 

\subsection{Small time versus small viscosity scaling}

Let us choose some $\varepsilon > 0$. We want to study what happens during the 
time interval $[0,\varepsilon T]$. To study this very short first stage, we 
perform the following change of scale. For $t \in [0,T]$ and $x \in [0,1]$, let:
\begin{equation}
\label{scaling}
\bar{y}(t,x) = \varepsilon y(\varepsilon t, x). 
\end{equation}
Hence, $\bar{y} \in X$ is now the solution to the small viscosity system:
\begin{equation}
	\label{burgers_epsilon}
	\left\{
	\begin{array}{rll}
		\bar{y}_t + \bar{y} \bar{y}_x - \varepsilon \bar{y}_{xx} & = \bar{u}(t)
		& \quad \mbox{in} \quad (0,T) \times (0,1), \\
		\bar{y}(t, 0) & = \bar{v}(t) &	\quad \mbox{in} \quad (0,T), \\
		\bar{y}(t, 1) & = 0 &	\quad \mbox{in} \quad (0,T), \\
		\bar{y}(0, x) & = \bar{y}_0(x) & \quad \mbox{in} \quad (0,1),
	\end{array}
	\right.
\end{equation}
where we performed the following scalings: 
$\bar{u}(t) = \varepsilon^2 u(\varepsilon t)$, 
$\bar{v}(t) = \varepsilon v(\varepsilon t)$
and $\bar{y}_0(x) = \varepsilon y_0(x)$. 
This scaling is fruitful because it highlights the fact that, when small
time scales are considered, the non-linear term is the key term.
We want to understand the behavior of the limit system when $\varepsilon = 0$.
Therefore, let us consider that $\bar{u}(\cdot)$, $\bar{v}(\cdot)$ and $\bar{y}_0(\cdot)$
are fixed data, and let $\varepsilon$ go to zero.

\subsection{Obtaining the entropy limit}

When one considers the entropy limit $\varepsilon \rightarrow 0$ 
for system~(\ref{burgers_epsilon}), it is not possible to keep on enforcing strong
Dirichlet boundary conditions.  
A boundary layer appears and it is necessary to weaken the boundary conditions.
Otherwise, the system would become over-constrained.
The pioneer work concerning the derivation of such weak boundary conditions 
is the one by Bardos, Le Roux and Nédélec in~\cite{MR542510}.
In our particular setting, one gets the following system: 
\begin{equation}
	\label{b2_i}
	\left\{
	\begin{array}{rll}
		\bar{y}_t + \frac{1}{2} (\bar{y}^2)_x & = \bar{u}(t)
		& \quad \mbox{in} \quad (0,T) \times (0,1), \\
		\bar{y}(t, 0) & \in E(\bar{v}(t)) &	\quad \mbox{in} \quad (0,T), \\
		\bar{y}(t, 1) & \geq 0 & \quad \mbox{in} \quad (0,T), \\
		\bar{y}(0, x) & = \bar{y}_0(x) & \quad \mbox{in} \quad (0,1),
	\end{array}
	\right.
\end{equation}
where
\begin{equation*}
 E(\alpha) = \left\{ 
 \begin{array}{ll}
  ]-\infty; 0] & \textrm{if} \quad \alpha \leq 0, \\
   ]-\infty;-\alpha] \cup \{\alpha\} & \textrm{if} \quad \alpha > 0.
 \end{array}
 \right.
\end{equation*}

Let us explain the physical meaning of the set $E(\cdot)$. On the one hand, 
when one tries to enforce a negative boundary data on the left side, 
characteristics  instantly flow out of the domain, and our actions are useless. 
On the other hand, if we set a positive boundary data, then: either it is 
satisfied, or a greater negative wave overwhelms it.

Without getting into the details of entropy solutions (for that subject, refer 
to the definition given in~\cite{MR542510} or to the book~\cite{MR1707279}), we 
will use the following theorem  that guarantees that system~(\ref{b2_i}) is 
well-posed. 

\begin{theorem}[Bardos, Le Roux and Nédélec in~\cite{MR542510}]
\label{thm_bardos}
For any initial data $y_0 \in BV(0,1)$ and any pair of controls
$u \in L^1(0,T)$, $v \in BV(0,T)$, system~(\ref{b2_i}) has a
unique entropy solution $\bar{y}$ in the space $BV((0,1) \times (0,T))$. 
\end{theorem}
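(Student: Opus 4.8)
The plan is to construct the entropy solution as the vanishing viscosity limit of the parabolic problem~(\ref{burgers_epsilon}), thereby reproducing the argument of Bardos, Le Roux and Nédélec~\cite{MR542510}. Since the source $\bar u(t)$ is constant in space it never enters the entropy fluxes nor the boundary analysis, so it can be carried unchanged through every estimate below (alternatively, the substitution $\bar y(t,x) = w(t, x - V(t)) + U(t)$ with $U(t) = \int_0^t \bar u$, $V(t) = \int_0^t U$ turns the equation into the homogeneous conservation law $w_t + (w^2/2)_\xi = 0$ and preserves the $BV$ class of the data); I therefore take $\bar u = 0$ for simplicity. First I would regularise the data: choose smooth $\bar y_0^\nu \to \bar y_0$ in $L^1(0,1)$ and $\bar v^\nu \to \bar v$ in $L^1(0,T)$ with total variations and sup norms bounded uniformly in $\nu$, and for each $\nu > 0$ invoke classical parabolic theory (as in~\cite{MR0259693}) to obtain a global smooth solution $\bar y^\nu$ of the viscous system with these data.

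The heart of the matter is a trio of bounds on $\bar y^\nu$ uniform in $\nu$. An $L^\infty$ bound follows from the maximum principle; concretely, Lemma~\ref{lemma.comparison} compares $\bar y^\nu$ with the constant sub/super-solutions $\pm \max(\|\bar y_0\|_\infty, \|\bar v\|_\infty)$. A uniform bound on the spatial total variation is obtained by differentiating the equation in $x$: the derivative $\bar y^\nu_x$ solves a linear parabolic equation of the form $\partial_t \phi + \partial_x(\bar y^\nu \phi) - \nu \partial_{xx}\phi = 0$, and estimating $\frac{d}{dt}\int_0^1 |\bar y^\nu_x|$ (after smoothing the absolute value) shows that $\int_0^1 |\bar y^\nu_x(t,\cdot)|$ stays controlled by $\mathrm{TV}(\bar y_0) + \mathrm{TV}(\bar v)$, the boundary flux terms being the delicate point. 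A uniform bound on the temporal total variation then follows from the equation $\bar y^\nu_t = \nu \bar y^\nu_{xx} - \tfrac12((\bar y^\nu)^2)_x$ together with the two previous bounds. Altogether $\mathrm{TV}_{(0,T)\times(0,1)}(\bar y^\nu)$ is bounded uniformly in $\nu$. I expect this step, namely the $BV$ estimate up to the boundary, to be the main obstacle: characteristics may enter or leave the interval, and it is exactly here that the admissible boundary set $E(\cdot)$ is forced upon us.

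With the uniform $BV$ bound, Helly's selection theorem yields a subsequence converging in $L^1$ and almost everywhere to some $\bar y \in BV((0,1)\times(0,T))$. Multiplying the viscous equation by $\eta'(\bar y^\nu - k)\,\varphi$ with $\eta$ convex and $\varphi \geq 0$, integrating by parts and discarding the nonnegative viscous term, then passing to the limit $\nu \to 0$, gives the interior Kruzhkov entropy inequalities for $\bar y$. At the boundary, the uniform $BV$ bound forces strong $L^1$ convergence of the traces $\gamma_0 \bar y^\nu$ and $\gamma_1 \bar y^\nu$, and the boundary-layer analysis of~\cite{MR542510}, which tracks the sign of the viscous flux through $x = 0$ and $x = 1$, identifies the limiting traces as satisfying $\gamma_0 \bar y(t) \in E(\bar v(t))$ and $\gamma_1 \bar y(t) \geq 0$. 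Hence $\bar y$ is a BLN entropy solution of~(\ref{b2_i}), which proves existence.

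Uniqueness is the classical Kruzhkov doubling of variables adapted to a bounded interval with BLN boundary conditions (see also~\cite{MR1707279}): doubling $(t,x)$ against an independent copy $(s,z)$, combining the entropy inequalities for two solutions and exploiting the boundary conditions through the set $E(\cdot)$, one obtains an $L^1$-contraction, in fact a comparison principle consistent with Lemma~\ref{lemma.comparison}, whence at most one entropy solution. Beyond careful bookkeeping I do not anticipate real difficulty in this last part; the genuine work lies in the uniform $BV$ estimate and in passing to the limit in the boundary condition.
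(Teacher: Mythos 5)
The paper does not prove this statement: Theorem~\ref{thm_bardos} is imported verbatim from Bardos, Le Roux and N\'ed\'elec~\cite{MR542510} and used as a black box, so there is no in-paper proof to compare yours against. What you propose is, in outline, the original BLN strategy itself (vanishing viscosity, uniform $L^\infty$ and $BV$ bounds, Helly compactness, interior Kruzhkov inequalities, identification of the weak boundary condition, doubling of variables for uniqueness), which is the right route; but as a self-contained argument it is incomplete, since the two genuinely hard steps --- the $BV$ estimate up to the boundary and the passage to the limit in the boundary condition --- are precisely the ones you delegate back to~\cite{MR542510}, i.e.\ to the theorem being proved.

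One step is moreover wrong as stated: you claim the uniform $BV$ bound forces strong $L^1$ convergence of the traces $\gamma_0\bar{y}^\nu$, $\gamma_1\bar{y}^\nu$ and that their limits satisfy the BLN conditions. The traces of the viscous solutions are the prescribed Dirichlet data $\bar{v}^\nu(t)$ and $0$; these converge to $\bar{v}$ and $0$, whereas the trace of the limit $\bar{y}$ is in general different --- this mismatch is exactly the boundary layer the present paper is about --- and the trace operator is not continuous under the weak-$*$ $BV$ convergence that Helly's theorem provides. The conditions $\gamma_0\bar{y}(t)\in E(\bar{v}(t))$ and $\gamma_1\bar{y}(t)\geq 0$ are instead obtained by passing to the limit in entropy inequalities tested with functions that do not vanish on the boundary, keeping control of the sign of the viscous boundary flux $\nu\,\bar{y}^\nu_x$ (equivalently, via boundary entropy--flux pairs). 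Two minor points: with the source present, constants are no longer super/sub-solutions, so the $L^\infty$ bound needs time-dependent barriers of the form $\pm\bigl(\max(\|\bar{y}_0\|_\infty,\|\bar{v}\|_\infty)+\int_0^t|\bar{u}|\bigr)$; and the substitution removing $\bar{u}$ turns $(0,1)$ into a moving domain, so it does not actually simplify the boundary analysis.
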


\subsection{Small time null controllability}

We are going to show a small time null controllability result for
the hyperbolic limit system. 
However, this will not imply small time global
controllability since the system is not time reversible. Indeed, 
even though the PDE seems time-reversible, the definition of an entropy
solution is not.

\begin{theorem}
System~(\ref{b2_i}) is small time globally exactly null controllable.
\end{theorem}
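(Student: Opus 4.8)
The plan is to steer the entropy solution to zero by exploiting the scalar control $\bar u(\cdot)$ to first flush the profile out of the domain through a favorable choice of characteristic speeds, and then to use $\bar u$ to kill the resulting constant state. Since we work with the hyperbolic system~(\ref{b2_i}), which is \emph{not} time-reversible, we cannot argue by reversing a trajectory from zero; instead the whole construction must be explicit and ``forward'' in time. Throughout, I will lean heavily on the comparison Lemma~\ref{lemma.comparison} (valid for entropy solutions as noted in the text) together with the well-posedness Theorem~\ref{thm_bardos}, and it suffices to treat $\bar y_0 \in BV(0,1)$ by the density/approximation already implicit in the strategy.

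\textbf{Step 1: shift the profile by a large negative constant.} On a first sub-interval $[0,T_1]$, I set $\bar v(t) = 0$ (so $\bar y(t,0) \in E(0) = \,]-\infty,0]$, which imposes no real constraint) and choose $\bar u(t) = -M$ for a large constant $M>0$ to be fixed. Comparing with the explicit affine-in-$x$ (indeed, spatially constant) solutions $\bar y^\pm(t,x) = c^\pm - Mt$ of the inviscid equation, Lemma~\ref{lemma.comparison} sandwiches $\bar y$ and shows that after a short time the solution becomes uniformly negative, with $\bar y(T_1,\cdot) \leq -\delta < 0$ on $[0,1]$, while the boundary conditions at $x=0$ ($\bar y(t,0) \leq 0 \in E(0)$, automatically admissible) and at $x=1$ ($\bar y(t,1) \geq 0$ is \emph{not} required to hold for a solution that is negative there — rather the entropy condition at $x=1$ is vacuous when the characteristic speed $\bar y(t,1) < 0$ points outward) cause no obstruction. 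Physically: all characteristics now travel to the left and exit through $x=0$.

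\textbf{Step 2: empty the domain.} Once $\bar y$ is negative and bounded away from $0$, keep $\bar u = 0$ (or a suitable bounded control) on $[T_1, T_2]$. Every characteristic has strictly negative speed, so the ``influence cone'' of the initial data at $T_1$ leaves $[0,1]$ in time at most $1/\delta$; by the finite speed of propagation for $BV$ entropy solutions, $\bar y(T_2,\cdot)$ is determined entirely by the left boundary data $\bar v$, which we have taken to be $0$ — hence $\bar y(T_2,\cdot) \equiv 0$ is \emph{already} exactly attained once $T_2 - T_1 > 1/\delta$. Choosing $M$ large makes $\delta$ large and $1/\delta$ as small as we wish, so $T_1 + T_2$ can be made smaller than the prescribed $T$; then we pad with $\bar u = \bar v = 0$ on the remaining interval, for which $\bar y \equiv 0$ is the (unique) solution.

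\textbf{Main obstacle.} The delicate point is the bookkeeping at the \emph{left} boundary $x=0$: even with $\bar v = 0$, the constraint is $\bar y(t,0) \in E(0) = \,]-\infty,0]$, so a negative trace is fine, but I must make sure no spurious positive boundary residue is forced in, and that the comparison functions I use as barriers are themselves admissible entropy sub/supersolutions for~(\ref{b2_i}) (in particular that their traces lie in the appropriate $E(\cdot)$ sets and satisfy the $\geq 0$ condition at $x=1$, or make that condition vacuous). The cleanest route is to check directly that spatially-constant affine functions $t\mapsto c - Mt$ are entropy solutions with admissible boundary data for the relevant sign of $c-Mt$, and then invoke Lemma~\ref{lemma.comparison} in both directions; quantifying $\delta$ and the exit time in terms of $M$ is then a short explicit computation, and letting $M\to\infty$ gives the small-time claim. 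I expect the verification that the barrier functions meet the weak boundary conditions of Bardos--Le Roux--Nédélec — rather than any analytic estimate — to be the part that needs the most care.
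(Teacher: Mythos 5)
There is a genuine gap, and it lies exactly where you predicted the care would be needed: the admissibility of your negative states at the uncontrolled boundary $x=1$. You have the geometry of the characteristics backwards there. At the \emph{right} endpoint, a negative value of $\bar y$ means the characteristic speed points \emph{into} the domain (leftward, from $x=1$ toward the interior), not out of it; outflow at $x=1$ corresponds to $\bar y(t,1)>0$. Hence the weak Bardos--Le Roux--N\'ed\'elec condition $\bar y(t,1)\geq 0$ in system~(\ref{b2_i}) is not vacuous for negative solutions — it forbids them. Your spatially constant barriers $c-Mt$, once negative, have an inadmissible trace at $x=1$ and are not solutions of~(\ref{b2_i}); more importantly, the actual entropy solution can never satisfy $\bar y(T_1,\cdot)\leq-\delta<0$ on all of $[0,1]$, because its trace at $x=1$ must stay nonnegative. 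So Step 1 as stated is impossible, and the comparison Lemma~\ref{lemma.comparison} cannot rescue it, since the functions you compare with are not admissible for the boundary-value problem.

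Concretely, under $\bar u=-M$ the uncontrolled boundary keeps the solution pinned near $0$ at $x=1$: characteristics are continuously emitted there with value close to $0$ and speed close to $0$, producing a profile of size roughly $-\sqrt{2M(1-x)}$ near $x=1$. In your transport stage these small values travel left with speed equal to their own (small) modulus, so a value of size $\epsilon$ needs a time of order $1/\epsilon$ to exit through $x=0$; the state decays to zero only asymptotically, and exact null controllability in time less than $T$ does not follow, however large $M$ is. This is precisely why the paper goes the other way: it pushes a large \emph{positive} constant in from the left ($\bar v\equiv H$ with $H$ chosen through the Rankine--Hugoniot condition~(\ref{condition_H}) so the shock overrides the data in time $T/2$, the trace at $x=1$ then being $H>0$ and hence admissible), and only once the state is spatially constant does it use the interior control $\bar u=-2H/T$, with $\bar v$ following, to bring the whole profile down to zero simultaneously. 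Your ``use $\bar u$ to kill a constant state'' idea is sound, but it can only be deployed after such a homogenization step; driving the state negative and flushing it out the left boundary cannot replace it.
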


Let us start by giving the intuition of the proof. In a first step, we enforce 
a constant left boundary data $H > 0$. It moves towards the right and overrides 
the initial data $\bar{y}_0(\cdot)$ provided that the shocks' propagation speed 
is sufficient. Therefore, $H$ is chosen by using the Rankine-Hugoniot formula. 
Figure~\ref{fig.godunov} shows a simulation of this first step for some smooth
initial data $\bar{y}_0$.
At the end of this step, we have $\bar{y}(\cdot) \equiv H$. During the second 
step, we use some constant negative $\bar{u}$ to get back down to the null 
state.

\begin{figure}[!ht]
\centering
\input{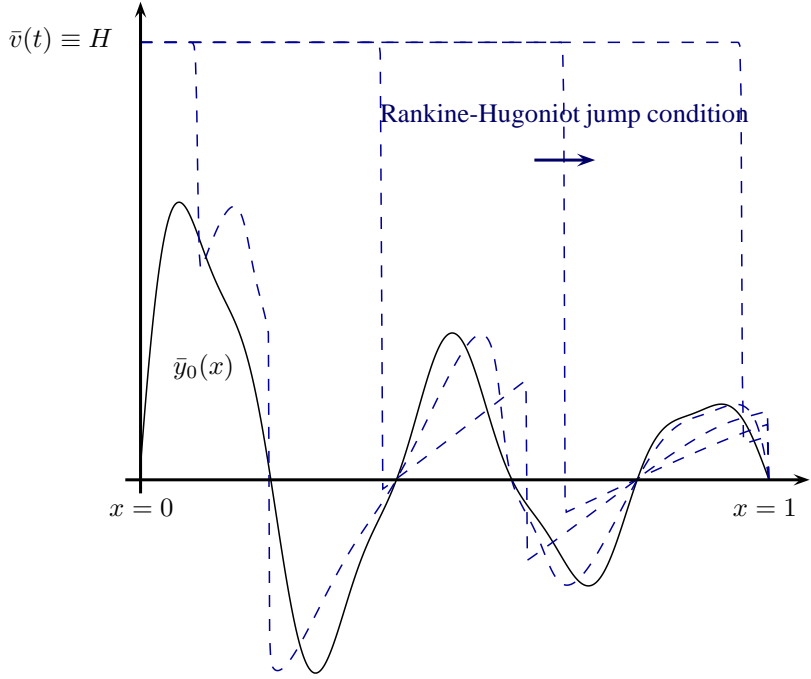}
\caption{Overriding of an initial data $\bar{y}_0(x)$ by some constant
state $\bar{y}(x) \equiv H$ for system~(\ref{b2_i}).}
\label{fig.godunov}
\end{figure}

Now let us give a rigorous proof using the comparison principle.

\begin{proof}
Let $\bar{y}_0(x) \in BV(0,1)$ and $T > 0$. Let us choose $H$ such that:
\begin{equation}
\label{condition_H}
 \frac{1}{2} \left( H - \|\bar{y}_0\|_{L^\infty} \right) \geq \frac{2}{T}.
\end{equation}
We enforce the following controls:
\begin{eqnarray*}
 \bar{v}(t) & = &  \left\{
 \begin{array}{ll}
  H & \textrm{for} \quad t \in [0,T/2], \\
  2H \left( 1 - \frac{t}{T} \right) & \textrm{for} \quad t \in [T/2, T],
 \end{array}
 \right. \\
 \bar{u}(t) & = & \left\{
 \begin{array}{ll}
  0 & \textrm{for} \quad t \in [0,T/2], \\
  - \frac{2H}{T} & \textrm{for} \quad t \in [T/2, T].
 \end{array}
\right. 
\end{eqnarray*}
From Theorem~\ref{thm_bardos}, we know that there exists a unique entropy solution
$\bar{y} \in BV((0,1)\times(0,T))$ for these data.
Let us show that $\bar{y}(T/2, \cdot) \equiv H$. 
Therefore, we will easily deduce $\bar{y}(T, \cdot) \equiv 0$.

\skipline

Let us extend our initial data from $[0,1]$ to $\R$. Since Theorem~\ref{thm_bardos}
guarantees the uniqueness of the solution, the restriction to $x \in [0,1]$ of our
global solution will be the unique solution to~(\ref{b2_i}).
Therefore we consider $\hat{y}_0 \in BV(\R)$:
\begin{equation}
 \hat{y}_0(x) = \left\{
 \begin{array}{ll}
   H & \textrm{for} \quad x < 0, \\
   \bar{y}_0(x) & \textrm{for} \quad 0 < x < 1, \\
   0 & \textrm{for} \quad 1 < x.   
 \end{array}
 \right.
\end{equation}
Let us introduce $\hat{y}$ the weak entropy solution defined on $\R \times [0,T]$
associated to this initial data. Thanks to Rankine-Hugoniot formula and
(\ref{condition_H}), we know that:
\begin{equation*}
 y(t,x) = H \quad \textrm{for} \quad x < t \cdot \frac{\left(H - \|y_0\|_{\infty}\right)}{2}.
\end{equation*}
Hence, $\hat{y}(T/2, x) = H$ for $x \in [0,1]$, and $y(t, 0^+) \equiv H$. If we 
want the restriction of $\hat{y}$ to be a solution
to~(\ref{b2_i}), we need to check that $y(t, 1^-) \geq 0$. Let us use the comparison
principle for solutions to inviscid Burgers' equation. It can be obtained by 
taking the null viscosity limit in our Lemma~\ref{lemma.comparison}.
Hence $\hat{y}(t,x) \geq w(t,x)$ where $w$ is the solution associated to the initial
data:
\begin{equation}
 w_0(x) = \left\{
 \begin{array}{ll}
   H & \textrm{for} \quad x < 0, \\
   - \|\bar{y}_0\|_{\infty} & \textrm{for} \quad 0 < x < 1, \\
   0 & \textrm{for} \quad 1 < x.   
 \end{array}
 \right.
\end{equation}
We have two Riemann problems. Near $x = 1$, we have a rarefaction wave. 
Hence $x \mapsto w(t,x)$
is continuous near $x = 1$ as long as the $H$ shock wave has not reached $x=1$.
Hence $w(t, 1^-) = 0$ before $T^* = 1/(2H - 2\|\bar{y}_0\|_{\infty})$, then
$w(t, 1^-) = H$. This is why $w(t, 1^-) \geq 0$. Thus
$\hat{y}(t, 1^-) \geq w(t, 1^-) \geq 0$. The restriction $\hat{y}_{[0,1]}$
is the unique solution to~(\ref{b2_i}) and it is equal to $H$ at time $t = T/2$.
\end{proof}

This proof uses the comparison principle for Burgers' equation. Since we consider
a 1-D system, this is not a problem. However, if we wanted to be able to handle
multi-dimensional systems, we could use the generalized characteristics method
from Dafermos (see~\cite{MR0457947}). This technique has been successfully used
by Perrollaz in~\cite{perrollaz}.

\section{Hyperbolic stage and settling of the boundary layer}
\label{section.hyperbolique}

Thanks to the analysis of the hyperbolic limit system, we were able to exhibit
controls steering the system towards the null state from any initial data. Now 
we want to apply the same strategy to the slightly viscous system 
(\ref{burgers_epsilon}) by using very similar controls. However, a boundary 
layer is going to appear. Our goal in this section is to derive bounds for the 
boundary layer at the end of this stage.

\subsection{Steady states of system~(\ref{burgers_epsilon})}

From now on, the viscosity is positive. Hence, since we have a zero Dirichlet 
boundary condition $\bar{y}(1) = 0$, we cannot hope to reach a constant state 
$\bar{y}(x) \equiv H > 0$ . However, we expect that we can get very close to 
the  corresponding steady state. Let us introduce the following steady state 
of system~(\ref{burgers_epsilon}):
\begin{equation}
  \label{def.h}
  h^{\varepsilon}(x) = H \tanh\left(\frac{H}{2\varepsilon}(1-x)\right).
\end{equation}

\begin{lemma}
\label{lemma.steady}
For any $H > 0$ and any $\varepsilon > 0$, $h^{\varepsilon}$ defined by 
(\ref{def.h}) is a stationary solution to system~(\ref{burgers_epsilon}) with 
controls: $\bar{u}(t) = 0$ and 
$\bar{v}(t) = H \tanh\left(\frac{H}{2\varepsilon}\right)$.
\end{lemma}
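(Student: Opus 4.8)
The plan is to verify directly that $h^\varepsilon$ meets the three requirements of being a stationary solution to~(\ref{burgers_epsilon}): it is time-independent, it solves the PDE with $\bar u\equiv 0$, and it matches the prescribed boundary values. Since $h^\varepsilon$ depends on $x$ only, $\partial_t h^\varepsilon \equiv 0$, so the evolution equation collapses to the ODE $h^\varepsilon (h^\varepsilon)_x - \varepsilon (h^\varepsilon)_{xx} = 0$, which I would rewrite in conservative form as $\bigl( \tfrac12 (h^\varepsilon)^2 - \varepsilon (h^\varepsilon)_x \bigr)_x = 0$. Thus it suffices to show that the flux $F := \tfrac12 (h^\varepsilon)^2 - \varepsilon (h^\varepsilon)_x$ is constant in $x$ and then to read off the endpoint values.

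First I would set $\xi := \tfrac{H}{2\varepsilon}(1-x)$, so that $h^\varepsilon = H\tanh\xi$, and use $\xi_x = -\tfrac{H}{2\varepsilon}$ together with $(\tanh)' = 1-\tanh^2$ to get $(h^\varepsilon)_x = -\tfrac{H^2}{2\varepsilon}\,(1-\tanh^2\xi)$. Substituting gives $F = \tfrac{H^2}{2}\tanh^2\xi + \tfrac{H^2}{2}(1-\tanh^2\xi) = \tfrac{H^2}{2}$, which is indeed independent of $x$; differentiating once more in $x$ yields the ODE, hence the PDE holds with $\bar u \equiv 0$. (One may equivalently recognize $h^\varepsilon$ as the classical viscous-shock profile for Burgers' equation, for which the conserved quantity $F$ is standard.)

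Next I would evaluate the boundary traces. At $x=1$ we have $\xi = 0$, so $h^\varepsilon(1) = H\tanh 0 = 0$, matching the homogeneous Dirichlet condition at $x=1$ in~(\ref{burgers_epsilon}); at $x=0$ we have $\xi = \tfrac{H}{2\varepsilon}$, so $h^\varepsilon(0) = H\tanh\!\left(\tfrac{H}{2\varepsilon}\right)$, which is exactly the claimed value of $\bar v(t)$, and it is constant in time as befits a stationary solution. Finally, $h^\varepsilon$ is smooth and bounded on $[0,1]$, hence lies in the solution space $X$, and by the uniqueness recalled after~(\ref{burgers2}) it is the solution associated to the data $(\bar u,\bar v,\bar y_0)=(0,\,H\tanh(\tfrac{H}{2\varepsilon}),\,h^\varepsilon)$.

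There is essentially no obstacle: the whole statement is a short computation once the conservative form is used. The only point requiring a little care is the chain-rule sign from $\xi_x = -H/(2\varepsilon)$ — it is precisely this sign that makes the two terms of $F$ add to a constant instead of cancelling — and, if one wants the stronger ``the solution'' phrasing, a pointer to the uniqueness statement for~(\ref{burgers2}).
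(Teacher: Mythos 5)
Your verification is correct and is exactly the ``easy computation'' the paper's proof alludes to (the paper simply asserts it and points to~\cite{MR1348646} for the classification of steady states with $\bar u = 0$): rewriting the stationary equation as $\bigl(\tfrac12 (h^{\varepsilon})^2 - \varepsilon (h^{\varepsilon})_x\bigr)_x = 0$ and checking the flux equals $H^2/2$, together with the endpoint values $h^{\varepsilon}(1)=0$ and $h^{\varepsilon}(0)=H\tanh\bigl(\tfrac{H}{2\varepsilon}\bigr)$, is all that is needed. No gap; your sign bookkeeping via $\xi_x=-H/(2\varepsilon)$ is right.
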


\begin{proof}
The proof is an easy computation. In fact, it is possible to compute explicitly
all the steady states for system~(\ref{burgers_epsilon}), at least when 
$\bar{u} = 0$. This is done in~\cite{MR1348646} with viscosity 
$\varepsilon = 1$. 
\end{proof}

We have chosen a boundary data 
$\bar{v}(t) = H \tanh\left(\frac{H}{2\varepsilon}\right)$ for the definition of 
our steady state $h^{\varepsilon}$, but we will use a control $\bar{v}(t) = H$ 
for the motion. This technical trick will lighten some computations and is 
relevant since both terms are exponentially close as $\varepsilon$ goes to zero. 
However, some proofs require the use of the exact steady state corresponding
to a boundary data $\bar{v}(t) = H$. Therefore, we introduce:
\begin{equation}
  \label{def.k}
  k^{\varepsilon}(x) = 
  K \tanh\left(\frac{K}{2\varepsilon}(1-x)\right),
\end{equation}
where $K > 0$ is given by the implicit relation 
$K \tanh \left( K/(2\varepsilon) \right) = H$.

\begin{lemma}
For any $H > 0$ and any $\varepsilon > 0$, $k^{\varepsilon}$ defined by 
(\ref{def.k}) is a stationary solution to system~(\ref{burgers_epsilon}) with 
controls: $\bar{u}(t) = 0$ and $\bar{v}(t) = H$. Moreover, we have the 
estimate:
\begin{equation}
\label{estimate.steady}
\left\| k^{\varepsilon} - h^{\varepsilon} \right\|_{L^\infty(0,1)}
\leq 2 H e^{-H/\varepsilon}.
\end{equation}
\end{lemma}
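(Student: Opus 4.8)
The plan is to treat the two assertions separately: first that $k^{\varepsilon}$ is a steady state of (\ref{burgers_epsilon}) for the controls $\bar u\equiv 0$, $\bar v\equiv H$, then the estimate (\ref{estimate.steady}).

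\textbf{Well-posedness of $K$ and the steady-state claim.} I would begin by noting that $K$ is well defined and unique: the map $M\mapsto M\tanh(M/(2\varepsilon))$ is continuous and strictly increasing on $(0,+\infty)$ — its derivative $\tanh(M/(2\varepsilon))+\tfrac{M}{2\varepsilon}\mathrm{sech}^2(M/(2\varepsilon))$ is positive — with limits $0$ at $0^+$ and $+\infty$ at $+\infty$, so there is exactly one $K>0$ with $K\tanh(K/(2\varepsilon))=H$. Then $k^{\varepsilon}$ solves the stationary equation $\bar y\bar y_x-\varepsilon\bar y_{xx}=0$ by the very same computation that proves Lemma~\ref{lemma.steady}, with $H$ replaced by $K$: using $\tanh'=1-\tanh^2$ one gets $k^{\varepsilon}_x=-\tfrac{K^2}{2\varepsilon}(1-\tanh^2(\cdot))$ and $\varepsilon k^{\varepsilon}_{xx}=-\tfrac{K^3}{2\varepsilon}\tanh(\cdot)(1-\tanh^2(\cdot))=k^{\varepsilon}k^{\varepsilon}_x$. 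The boundary conditions hold since $k^{\varepsilon}(1)=0$ and $k^{\varepsilon}(0)=K\tanh(K/(2\varepsilon))=H=\bar v$. (Equivalently, one can invoke the explicit classification of all steady states of (\ref{burgers_epsilon}) already used above.)

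\textbf{The estimate.} The strategy is to control $k^{\varepsilon}-h^{\varepsilon}$ by the size of $K-H$, and then show that $K-H$ is exponentially small. First, $K>H$ because $\tanh(K/(2\varepsilon))<1$, and $k^{\varepsilon}\ge h^{\varepsilon}$ on $[0,1]$ (for fixed $x<1$ the map $M\mapsto M\tanh(\tfrac M{2\varepsilon}(1-x))$ is increasing, or apply Lemma~\ref{lemma.comparison}). Writing the difference as an integral over this family,
\begin{equation*}
k^{\varepsilon}(x)-h^{\varepsilon}(x)=\int_{H}^{K}\Big(\tanh\gamma+\gamma\,\mathrm{sech}^2\gamma\Big)\,dM,\qquad \gamma=\gamma(M)=\tfrac{M}{2\varepsilon}(1-x)\ge 0,
\end{equation*}
and using the elementary bound $\gamma\,\mathrm{sech}^2\gamma\le\tanh\gamma$ (which is equivalent to $2\gamma\le\sinh 2\gamma$), one obtains $0\le k^{\varepsilon}(x)-h^{\varepsilon}(x)\le 2(K-H)$ uniformly in $x$. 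For the size of $K-H$, the defining relation gives $K-H=K\big(1-\tanh(K/(2\varepsilon))\big)$, and since $1-\tanh z=\tfrac{2}{e^{2z}+1}\le 2e^{-2z}$ and $K>H$ this yields $K-H\le 2Ke^{-K/\varepsilon}\le 2Ke^{-H/\varepsilon}$; a short bootstrap — the same relation can be rewritten $e^{K/\varepsilon}=\tfrac{K+H}{K-H}$ — lets one convert this into the bound $2He^{-H/\varepsilon}$ of (\ref{estimate.steady}).

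\textbf{Main obstacle.} The delicate point is pinning down the constant: combining the two crude inequalities above loses a bounded multiplicative factor (coming from $\sup_{\gamma\ge0}(\tanh\gamma+\gamma\,\mathrm{sech}^2\gamma)$, i.e.\ the root of $u\tanh u=1$, and from trading $K$ for $H$), and in fact the maximum of $k^{\varepsilon}-h^{\varepsilon}$ is attained at an interior point of $(0,1)$ rather than at $x=0$, where it equals $H(1-\tanh(H/(2\varepsilon)))=\tfrac{2H}{e^{H/\varepsilon}+1}$. A cleaner route to the sharp exponential rate is to exploit that $k^{\varepsilon}$ and $h^{\varepsilon}$ share the first integral $\varepsilon y'=\tfrac12(y^2-M^2)$ (with $M=K$, resp.\ $M=H$), so that $w:=k^{\varepsilon}-h^{\varepsilon}$ solves the linear ODE $2\varepsilon w'=(k^{\varepsilon}+h^{\varepsilon})w-(K^2-H^2)$ with $w(1)=0$; integrating this explicitly and estimating the resulting kernel by comparison with $k^{\varepsilon}$ gives $w(x)\le \mathrm{sech}^2(K/(2\varepsilon))\,k^{\varepsilon}(x)\le H\,\mathrm{sech}^2(K/(2\varepsilon))$, which is exponentially small of the announced order once the remaining one-variable inequality between $K$ and $H$ is verified. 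In every approach the two ingredients are the same: a Lipschitz-type control $\|k^{\varepsilon}-h^{\varepsilon}\|_{L^\infty(0,1)}\le C\,(K-H)$, and the exponential smallness of $K-H$ forced by the transcendental relation defining $K$.
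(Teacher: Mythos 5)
Your route to the estimate is genuinely different from the paper's, and in one respect more careful. The paper's proof is three lines: it bounds $\left\|k^{\varepsilon}-h^{\varepsilon}\right\|_{L^\infty(0,1)}$ by the value of the difference at $x=0$, namely $\left|K\tanh(K/2\varepsilon)-H\tanh(H/2\varepsilon)\right|=H\left(1-\tanh(H/2\varepsilon)\right)$, and finishes with $1-\tanh z\le 2e^{-2z}$. You instead integrate $\partial_M\bigl[M\tanh\bigl(\tfrac{M}{2\varepsilon}(1-x)\bigr)\bigr]$ over $M\in[H,K]$, or use the shared first integral $\varepsilon y'=\tfrac12(y^2-M^2)$; both of your intermediate bounds are correct and unconditional ($0\le k^{\varepsilon}-h^{\varepsilon}\le 2(K-H)$, and along the ODE route $k^{\varepsilon}-h^{\varepsilon}\le\frac{K^2-H^2}{K^2}\,k^{\varepsilon}=\mathrm{sech}^2(K/2\varepsilon)\,k^{\varepsilon}\le H\,\mathrm{sech}^2(K/2\varepsilon)$, since $k^{\varepsilon}$ admits the analogous integral representation). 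You also correctly observe that the maximum of $k^{\varepsilon}-h^{\varepsilon}$ is attained at an interior point for small $\varepsilon$ — precisely the fact the paper's first inequality silently denies.

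The genuine gap is the step you defer to a ``short bootstrap''/``remaining one-variable inequality'': it cannot be closed, because the constant $2$ in~(\ref{estimate.steady}) is not reachable as $\varepsilon\to0$. From $e^{K/\varepsilon}=(K+H)/(K-H)$ one gets $K-H=(K+H)e^{-K/\varepsilon}\sim 2He^{-H/\varepsilon}$, so $2(K-H)$ and $H\,\mathrm{sech}^2(K/2\varepsilon)\le 4He^{-K/\varepsilon}$ are both asymptotically $4He^{-H/\varepsilon}$; worse, the supremum itself is asymptotically $u^*(K-H)\sim 2u^*He^{-H/\varepsilon}$ with $u^*\tanh u^*=1$, $u^*\approx1.2$, attained where $\tfrac{H}{2\varepsilon}(1-x)\approx u^*$. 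Concretely, for $H=1$, $\varepsilon=0.1$ one finds $K\approx1.00009$ and $k^{\varepsilon}(0.76)-h^{\varepsilon}(0.76)\approx1.09\cdot10^{-4}$, whereas $2He^{-H/\varepsilon}\approx9.08\cdot10^{-5}$; so~(\ref{estimate.steady}) fails as literally stated for small $\varepsilon$, and the paper's own reduction to $x=0$ is incorrect in the same regime. What your argument honestly proves is $\left\|k^{\varepsilon}-h^{\varepsilon}\right\|_{L^\infty(0,1)}\le H\,\mathrm{sech}^2(K/(2\varepsilon))\le 4He^{-H/\varepsilon}$, and this weaker constant is all that is needed downstream (only the order $\mathcal{O}(He^{-H/\varepsilon})$ enters Lemma~\ref{lemma.push} and the proof of Theorem~\ref{thm.frederic2}); state and use the estimate in that form rather than chasing the factor $2$.
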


\begin{proof}
Lemma \ref{lemma.steady} gives us that $k^{\varepsilon}$ is a steady state. 
For the estimate, we write:
\begin{eqnarray*}
\left\| k^{\varepsilon} - h^{\varepsilon} \right\|_{L^\infty(0,1)}
& \leq & \left| K \tanh\left(\frac{K}{2\varepsilon}\right)
- H \tanh\left(\frac{H}{2\varepsilon}\right) \right| \\
& \leq & H \left| 1 - \tanh\left(\frac{H}{2\varepsilon}\right) \right| \\
& \leq & 2 H e^{-H/\varepsilon}.
\end{eqnarray*}
\end{proof}

\subsection{First step: overriding the initial data}

In order to get close to the steady state $h^{\varepsilon}$, it is necessary to 
choose $H$ in such a way that a Rankine-Hugoniot type condition is satisfied. 
Once we get close enough to the steady state, the solution will very quickly 
converge to the steady state. Indeed, the eigenvalues of the linearized system 
around this steady state are real, negative, and of size at least 
$1/\varepsilon$. This guarantees very quick convergence to the steady state. 
Such a study of the linearized problem around a steady state for the Burgers' 
equation can be found in~\cite{MR863984}. We give the following lemma 
describing the settling of the limit layer.

\begin{lemma}
\label{lemma.settling}
Let $T > 0$, $H > 0$ and $y_0 \in H^1_0(0,1)$ be given data. 
Then for $\varepsilon > 0$ small enough, there exists a boundary 
control  $\bar{v} \in H^{3/4}(0,T)$  such that $\bar{v}(\cdot) \leq H$ and such
that the solution $\bar{y} \in X$ to system~(\ref{burgers_epsilon}) 
with initial data $\bar{y}_0 = \varepsilon y_0$ and controls $\bar{u} = 0$ and 
$\bar{v}$ satisfies:
\begin{equation}
\label{estimate.lemma.settling}
\left\| \bar{y}(T, \cdot) - h_{\varepsilon}(\cdot) \right\|_{L^2(0,1)}
= \mathcal{O}_{\varepsilon \rightarrow 0} 
\left( \varepsilon^{-1/2} e^{-\frac{H}{4\varepsilon}\left(HT - 2\right)} \right).
\end{equation}
\end{lemma}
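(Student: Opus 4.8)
The plan is to use the controls suggested by the hyperbolic analysis, namely $\bar u\equiv 0$ and $\bar v\equiv H$ on $[0,T]$ — a constant function, hence trivially in $H^{3/4}(0,T)$ with $\bar v\leq H$, the associated $\bar y$ belonging to $X$ by the existence theory recalled in the introduction (one may instead ramp $\bar v$ up from $0$ on a time interval short enough not to affect the estimate below, should a trace compatible with $\bar y_0$ be wanted) — and to analyse the resulting solution through the Cole--Hopf transform. The comparison Lemma~\ref{lemma.comparison} already gives the easy half: for $\varepsilon$ small one has $\bar y_0=\varepsilon y_0\leq k^\varepsilon$ on $[0,1]$ (away from $x=1$, $k^\varepsilon\to H$ while $\varepsilon\|y_0\|_{L^\infty}\to 0$; near $x=1$, $k^\varepsilon$ has slope of order $1/\varepsilon$ whereas $\varepsilon y_0$ has bounded slope), and since moreover $\bar u=0$, $\bar v\leq H$ and $\bar y(t,1)=0=k^\varepsilon(1)$, comparison with the steady state $k^\varepsilon$ gives $\bar y(t,\cdot)\leq k^\varepsilon$ for all $t\in[0,T]$. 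It therefore remains to bound $\bar y(T,\cdot)$ from below, and we shall in fact show it is $L^2$-close to $k^\varepsilon$, then pass to $h^\varepsilon$ via (\ref{estimate.steady}).

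Setting $\bar y=-2\varepsilon\,\partial_x\log\theta$ turns (\ref{burgers_epsilon}) with $\bar u=0$ into the heat equation $\theta_t=\varepsilon\theta_{xx}$ on $(0,1)$, with Neumann condition $\theta_x(t,1)=0$ (from $\bar y(t,1)=0$), Robin condition $\theta_x(t,0)=-\tfrac{H}{2\varepsilon}\theta(t,0)$ (from $\bar y(t,0)=H$), and initial datum $\theta(0,x)=\exp\!\big(-\tfrac12\int_0^x y_0\big)$, which satisfies $0<c_*\leq\theta(0,\cdot)\leq C_*$ with $c_*,C_*$ depending only on $\|y_0\|_{L^\infty}$; positivity of $\theta$ is preserved, so the transform stays licit. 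The point is that $k^\varepsilon=-2\varepsilon\,\partial_x\log\varphi_0$ for $\varphi_0(x)=\cosh\!\big(\tfrac{K}{2\varepsilon}(1-x)\big)$, and this $\varphi_0$ is exactly the principal eigenfunction of $-\varepsilon\partial_{xx}$ with the above boundary conditions, its eigenvalue being $\lambda_0=-\tfrac{K^2}{4\varepsilon}<0$ — negative precisely because the Robin coefficient has the anti-dissipative sign.

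The heart of the argument is the quantitative spectral analysis, which I expect to be the main obstacle. The operator is self-adjoint but indefinite: $\lambda_0\sim-\tfrac{H^2}{4\varepsilon}$ (recall $K=H+\mathcal{O}(e^{-H/\varepsilon})$) is its only negative eigenvalue, with positive eigenfunction $\varphi_0$, while all the $\lambda_n$, $n\geq 1$, are nonnegative (they correspond to $\cos(\nu_n(1-x))$ with $\nu_n\tan\nu_n=-\tfrac{H}{2\varepsilon}$, so $\nu_1\to(\pi/2)^+$ and $\lambda_n=\varepsilon\nu_n^2\geq 0$), whence a spectral gap $\lambda_1-\lambda_0\geq\tfrac{H^2}{4\varepsilon}$ for $\varepsilon$ small. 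Writing $\theta(t,\cdot)=c_0e^{-\lambda_0 t}\varphi_0+r(t,\cdot)$ with $r=\sum_{n\geq 1}c_ne^{-\lambda_n t}\varphi_n$, one needs: that the leading coefficient is positive and not too small, $c_0=\langle\theta_0,\varphi_0\rangle/\|\varphi_0\|_{L^2}^2\asymp e^{-K/(2\varepsilon)}$ (by inserting the explicit $\varphi_0$ and the two-sided bound on $\theta_0$); and that the remainder stays controlled, $\|r(t,\cdot)\|_{L^2}\leq 2C_*$ and, by parabolic smoothing and $\lambda_n\geq 0$, $\|r_x(t,\cdot)\|_{L^2}\leq C(\varepsilon t)^{-1/2}$. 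It is the interplay between the size of $\lambda_0$ and that of $c_0$ that produces the exponent $\tfrac{H}{4\varepsilon}(HT-2)$.

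Finally, putting $\delta:=e^{\lambda_0 T}/c_0$, one has $\bar y(T,x)=-2\varepsilon\,\dfrac{\varphi_0'(x)+\delta\,r_x(T,x)}{\varphi_0(x)+\delta\,r(T,x)}$ with $|\delta|\asymp e^{\lambda_0 T+K/(2\varepsilon)}=e^{-\frac{K^2T}{4\varepsilon}+\frac{K}{2\varepsilon}}\asymp e^{-\frac{H}{4\varepsilon}(HT-2)}$. Since $\varphi_0\geq 1$, the denominator is $\geq\tfrac12$ once $\varepsilon$ is small ($|\delta|$ beats the polynomial bound on $\|r(T,\cdot)\|_{L^\infty}$), and subtracting $k^\varepsilon=-2\varepsilon\varphi_0'/\varphi_0$ gives
\[
\bar y(T,\cdot)-k^\varepsilon=-2\varepsilon\delta\left(\frac{r_x(T,\cdot)}{\varphi_0+\delta r(T,\cdot)}+\frac{k^\varepsilon}{2\varepsilon}\cdot\frac{r(T,\cdot)}{\varphi_0+\delta r(T,\cdot)}\right),
\]
whence $\|\bar y(T,\cdot)-k^\varepsilon\|_{L^2}\leq C|\delta|\big(\varepsilon\|r_x(T,\cdot)\|_{L^2}+\|k^\varepsilon\|_{L^\infty}\|r(T,\cdot)\|_{L^2}\big)\leq C|\delta|=\mathcal{O}\!\big(\varepsilon^{-1/2}e^{-\frac{H}{4\varepsilon}(HT-2)}\big)$, the factor $\varepsilon^{-1/2}$ being comfortable slack. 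Adding $\|k^\varepsilon-h^\varepsilon\|_{L^2}\leq 2He^{-H/\varepsilon}$ from (\ref{estimate.steady}), which is of lower order in the relevant regime (where $HT$ stays close to $2$, as dictated by the Rankine--Hugoniot condition (\ref{condition_H})), yields (\ref{estimate.lemma.settling}).
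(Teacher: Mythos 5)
Your proposal is correct in substance, but it takes a genuinely different route from the paper. The paper deliberately avoids the constant control $\bar v\equiv H$: since the Dirichlet datum $H$ leaves the trace $\bar y_x(t,0)$ unknown after Cole--Hopf (see (\ref{system.HC1})), it prescribes the control in the Cole--Hopf domain instead, solving the linear system (\ref{system.HC2}) (datum $1$ at $x=0$, Neumann at $x=1$, zeroth-order term $-\tfrac{H^2}{4\varepsilon}$) by the explicit, $\varepsilon$-independent basis $\sqrt2\sin((n+\tfrac12)\pi x)$ in Lemma~\ref{lemma.HC}, reading off $\bar v(t)=-2\varepsilon Z_x(t,0)$ --- which corresponds to the nonlinear Robin condition (\ref{relation.HC}), not to $\bar y(t,0)=H$ --- then proving $\bar v\le H$ a posteriori by a maximum-principle argument, and comparing with $h^\varepsilon$ directly through its Cole--Hopf image $H^\varepsilon$. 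You keep the natural control $\bar v\equiv H$, so Cole--Hopf yields a heat equation with an anti-dissipative Robin condition at $x=0$, and you get the rate from its spectrum: a single negative eigenvalue $-K^2/(4\varepsilon)$ whose eigenfunction encodes $k^\varepsilon$, a gap of order $H^2/(4\varepsilon)$, and $c_0\asymp e^{-K/(2\varepsilon)}$. The paper's choice buys complete explicitness at the price of the extra $L^\infty$ step; yours makes $\bar v\le H$ automatic and makes the origin of the exponent $\tfrac{H}{4\varepsilon}(HT-2)$ (decay $e^{\lambda_0T}$ against mass $c_0$) transparent, at the price of an $\varepsilon$-dependent eigenbasis defined by transcendental equations and of targeting $k^\varepsilon$ rather than $h^\varepsilon$.

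A few points need tightening, none fatal. (i) For the specific function $\theta=\exp(-\tfrac1{2\varepsilon}\int_0^x\bar y)$ the equation is not the pure heat equation: a time-dependent zeroth-order term involving $\bar y_x(t,0)$ appears, exactly as in (\ref{system.HC1}). Either note that multiplying $\theta$ by a suitable function of time alone (which leaves $-2\varepsilon\theta_x/\theta$ unchanged) removes it, or argue forwards: solve the Robin--Neumann heat problem with your initial datum, check positivity, verify its logarithmic derivative solves (\ref{burgers_epsilon}) with data $H$, $0$, $\varepsilon y_0$, and invoke uniqueness in $X$. (ii) The preliminary ``easy half'' is both unnecessary (your spectral computation is two-sided) and incorrectly justified: an $H^1_0$ datum need not have bounded slope, so $\varepsilon y_0\le k^\varepsilon$ can fail near $x=1$ (take $y_0\sim(1-x)^{3/4}$); simply drop that step rather than rely on Lemma~\ref{lemma.comparison} there. (iii) The bound $\|r_x(t)\|_{L^2}\le C(\varepsilon t)^{-1/2}$ ignores the negative boundary term in the quadratic form $\varepsilon\int u_x^2-\tfrac H2u(0)^2$; absorbing the trace by interpolation gives instead $\|r_x(T)\|_{L^2}\lesssim(\varepsilon T)^{-1/2}+H\varepsilon^{-1}$, which is all you need since only polynomial-in-$1/\varepsilon$ losses matter. (iv) The lower bound on the denominator uses that $|\delta|$ is exponentially small, i.e.\ $HT>2$; for $HT\le2$ the stated bound is huge and follows from a crude two-sided bound on $\bar y$ (comparison from above by $k^\varepsilon$ and from below by the solution with zero boundary data), but this case should be mentioned. (v) Passing from $k^\varepsilon$ to $h^\varepsilon$ via (\ref{estimate.steady}) adds $2He^{-H/\varepsilon}$, which is dominated by $\varepsilon^{-1/2}e^{-\frac H{4\varepsilon}(HT-2)}$ only when $HT\le6$; either state the estimate with $K$ in place of $H$, or remark that the extra term is exponentially small and harmless for the use made of the lemma in Theorem~\ref{thm.frederic2}.
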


\begin{figure}[!ht]
\centering
\input{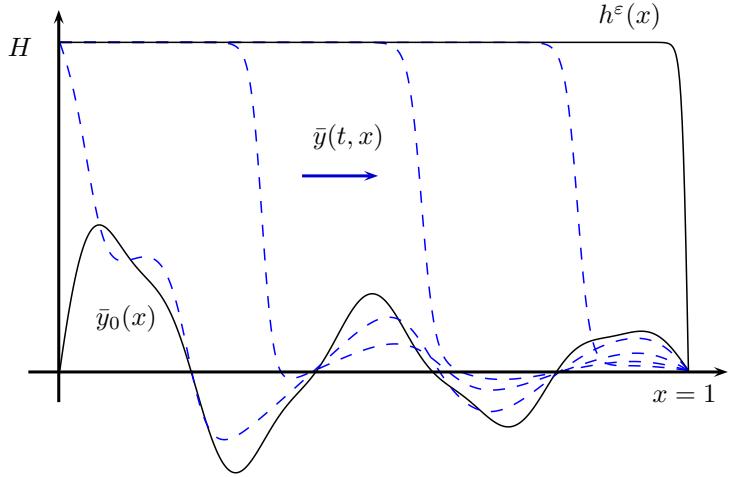}
\caption{Example of evolution from an initial data towards a steady state.}
\end{figure}

Let us postpone the proof of Lemma~\ref{lemma.settling} for the moment. We start
by giving a few remarks concerning this statement and its proof.
The intuition is to choose a boundary control $\bar{v}(t) \equiv H$, just like
we have done for the hyperbolic case. Moreover, we want to use the Cole-Hopf 
transform and Fourier series to compute explicitly $\bar{y}(T, \cdot)$. Let us 
introduce the Cole-Hopf transform:
\begin{equation*}
Z(t,x) = \exp\left( - \frac{1}{2\varepsilon} \int_0^x \bar{y}(t,s) ds \right).
\end{equation*}
This leads to the following heat system for the new unknown $Z$: 
\begin{equation}
\label{system.HC1}
\left\{
	\begin{array}{rll}
		Z_t - \varepsilon Z_{xx} & = 
		- \left(\frac{1}{4\varepsilon} \bar{y}^2(t,0) 
		- \frac{1}{2}\bar{y}_x(t,0) \right) Z
		& \quad \mbox{on} \quad (0,T) \times (0,1), \\
		Z(t, 0) & = 1 &	\quad \mbox{on} \quad (0,T), \\
		Z_x(t, 1) & = 0 & \quad \mbox{on} \quad (0,T), \\
		Z(0, x) & = Z^0(x) & \quad \mbox{on} \quad (0,1),
	\end{array}
	\right.
\end{equation}
where the initial data $Z^0$ is computed from the initial data 
$\bar{y}_0 = \varepsilon y_0$: 
\begin{equation}
\label{def.Z0}
 Z^0(x) = \exp \left( - \frac{1}{2} \int_0^x y_0(s) ds \right).
\end{equation}
Hence we see that it will not be possible to carry on explicit computations if
we do choose $\bar{y}(t,0) \equiv H$. Indeed, in that case, we would not know
explicitly $\bar{y}_x(t,0)$ (which is needed to compute the solution to system
(\ref{system.HC1})). However, we are confident that this term is very small.
Hence, we are going to go the other way around: we will choose our control 
explicitly in the Cole-Hopf domain and use it to compute our control 
$\bar{v}(\cdot)$. Therefore, we are interested in the following heat system:
\begin{equation}
\label{system.HC2}
\left\{
	\begin{array}{rll}
		Z_t - \varepsilon Z_{xx} & = - \frac{H^2}{4\varepsilon} Z
		& \quad \mbox{on} \quad (0,T) \times (0,1), \\
		Z(t, 0) & = 1 &	\quad \mbox{on} \quad (0,T), \\
		Z_x(t, 1) & = 0 & \quad \mbox{on} \quad (0,T), \\
		Z(0, x) & = Z^0(x) & \quad \mbox{on} \quad (0,1).
	\end{array}
	\right.
\end{equation}
If we go back to the Burgers' domain, this means that we somehow use the 
following boundary condition at $x = 0$: 
\begin{equation}
\label{relation.HC}
\bar{y}_x(t,0) = \frac{1}{2 \varepsilon} \left( \bar{y}^2(t,0) - H^2\right).
\end{equation}
We expect that the solution $Z$ will converge towards $H^{\varepsilon}(\cdot)$,
where $H^{\varepsilon}(\cdot)$ is the Cole-Hopf transform of the steady state
$h^{\varepsilon}$:
\begin{equation}
H^{\varepsilon}(x) = \frac{\cosh \left( \frac{H}{2\varepsilon}(1-x) \right)}
{\cosh \frac{H}{2\varepsilon}}.
\end{equation}
Indeed, we have the following lemma.

\newcommand{\Xdeux}{L^2((0,T); H^3(0,1)) \cap H^1((0,T);H^1(0,1))}
\begin{lemma}
\label{lemma.HC}
Let $T > 0$ and $Z^0 \in H^2(0,1)$ such that $Z^0(0) = 1$ and $Z^0_x(1) = 0$.
Then system~(\ref{system.HC2}) has a unique solution $Z$ in the space
$L^2((0,T); H^3(0,1)) \cap H^1((0,T);H^1(0,1))$. Moreover, there exists a 
constant $C(Z^0) > 0$ depending only on $\|Z^0\|_{H^1}$ such that:
\begin{equation}
\label{estimate.lemma.HC}
\left\| Z(T, \cdot) - H^{\varepsilon}(\cdot) \right\|_{H^1(0,1)} \leq
\varepsilon^{-1/2} C(Z^0) e^{-\frac{H^2T}{4\varepsilon}}.
\end{equation}
\end{lemma}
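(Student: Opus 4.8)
The plan is to subtract the steady state and diagonalise the resulting autonomous heat equation in the eigenbasis of the mixed Dirichlet--Neumann Laplacian; this makes both the well-posedness and the exponential estimate essentially transparent. One first checks that $H^{\varepsilon}$ is a stationary solution of~(\ref{system.HC2}): since $H^{\varepsilon}(x)=\cosh(\tfrac{H}{2\varepsilon}(1-x))/\cosh\tfrac{H}{2\varepsilon}$ one has $(H^{\varepsilon})''=\tfrac{H^{2}}{4\varepsilon^{2}}H^{\varepsilon}$, hence $-\varepsilon (H^{\varepsilon})''+\tfrac{H^{2}}{4\varepsilon}H^{\varepsilon}=0$, while $H^{\varepsilon}(0)=1$ and $(H^{\varepsilon})'(1)=0$. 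So $W:=Z-H^{\varepsilon}$ is expected to solve the homogeneous problem $W_{t}-\varepsilon W_{xx}+\tfrac{H^{2}}{4\varepsilon}W=0$, $W(t,0)=0$, $W_{x}(t,1)=0$, with $W(0,\cdot)=W_{0}:=Z^{0}-H^{\varepsilon}$; note that $W_{0}\in H^{2}(0,1)$ with $W_{0}(0)=0$ and $W_{0}'(1)=0$.

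For well-posedness, let $A:=-\varepsilon\partial_{xx}+\tfrac{H^{2}}{4\varepsilon}$ on $L^{2}(0,1)$ with domain $D(A)=\{f\in H^{2}(0,1):f(0)=0,\ f'(1)=0\}$; this is a self-adjoint positive operator with simple eigenvalues $\mu_{n}=\varepsilon\omega_{n}^{2}+\tfrac{H^{2}}{4\varepsilon}$ and orthonormal eigenfunctions $e_{n}(x)=\sqrt{2}\sin(\omega_{n}x)$, where $\omega_{n}=\tfrac{(2n+1)\pi}{2}$, $n\ge0$. Since $W_{0}\in D(A)$, the function $W(t)=e^{-tA}W_{0}$ belongs to $C([0,T];D(A))$, and $W_{t}=-AW=e^{-tA}\bigl(\varepsilon W_{0}''-\tfrac{H^{2}}{4\varepsilon}W_{0}\bigr)$ is the solution of the same heat equation with the $L^{2}$ datum $\varepsilon W_{0}''-\tfrac{H^{2}}{4\varepsilon}W_{0}$, hence lies in $C([0,T];L^{2})\cap L^{2}((0,T);H^{1})$; the equation then upgrades $\varepsilon W_{xx}=W_{t}+\tfrac{H^{2}}{4\varepsilon}W$ to $L^{2}((0,T);H^{1})$, i.e.\ $W\in L^{2}((0,T);H^{3})$. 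Adding back the fixed smooth function $H^{\varepsilon}$ gives the claimed regularity of $Z$; uniqueness is immediate, either from this representation or from a one-line energy estimate on the difference of two solutions.

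For the estimate, write $W_{0}=\sum_{n\ge0}c_{n}e_{n}$ with $\sum_{n}c_{n}^{2}=\|W_{0}\|_{L^{2}}^{2}$, so that $W(T)=\sum_{n}c_{n}e^{-\mu_{n}T}e_{n}$. Because $e_{n}'=\sqrt{2}\,\omega_{n}\cos(\omega_{n}x)$ and the family $\{\sqrt{2}\cos(\omega_{n}x)\}_{n\ge0}$ is itself orthonormal in $L^{2}(0,1)$ (the $\omega_{n}$ being odd multiples of $\pi/2$), one obtains $\|W(T)\|_{H^{1}}^{2}=\sum_{n}c_{n}^{2}(1+\omega_{n}^{2})e^{-2\mu_{n}T}$. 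Factoring $e^{-H^{2}T/(2\varepsilon)}$ out of every term and bounding what remains by $\bigl(\sup_{s\ge0}(1+s)e^{-2\varepsilon Ts}\bigr)\|W_{0}\|_{L^{2}}^{2}$, an elementary maximisation shows $\sup_{s\ge0}(1+s)e^{-2\varepsilon Ts}\le\tfrac{1}{2\varepsilon T}$ once $\varepsilon\le\tfrac{1}{2T}$. Hence $\|W(T)\|_{H^{1}}\le(2T)^{-1/2}\varepsilon^{-1/2}e^{-H^{2}T/(4\varepsilon)}\|W_{0}\|_{L^{2}}$, and since $0<H^{\varepsilon}\le1$ we have $\|W_{0}\|_{L^{2}}\le\|Z^{0}\|_{L^{2}}+1\le\|Z^{0}\|_{H^{1}}+1$, which yields~(\ref{estimate.lemma.HC}) with $C(Z^{0})=(2T)^{-1/2}(\|Z^{0}\|_{H^{1}}+1)$ (the harmless dependence on the fixed time $T$ being suppressed in the notation).

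I expect the only real subtleties to be bookkeeping: verifying that $\{e_{n}\}$ and the conjugate cosine family are genuine orthonormal bases of $L^{2}(0,1)$ — the standard Sturm--Liouville fact for these mixed boundary conditions — and pinning down the exact solution space, for which differentiating the equation in time (licit precisely because $W_{0}\in D(A)$) is the cleanest route. The exponential gain itself is free: it comes entirely from the zeroth-order term $-\tfrac{H^{2}}{4\varepsilon}Z$, which translates every eigenvalue of the heat operator upward by $H^{2}/(4\varepsilon)$, while the $\varepsilon^{-1/2}$ loss is exactly the cost of measuring $W(T)$ in $H^{1}$ rather than in $L^{2}$.
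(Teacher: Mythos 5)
Your argument is correct, and it reaches the same estimate through a variant of the paper's spectral computation that is in some respects cleaner. The paper homogenizes only the boundary condition (writing $Z=1+w$), solves the resulting inhomogeneous ODEs for the Fourier coefficients in the same basis $\sqrt{2}\sin\left(\left(n+\tfrac12\right)\pi x\right)$, bounds each weighted coefficient $\lambda_n^2\alpha_n^2$ uniformly via explicit scalar products (this is where the $H^1$ norm of $Z^0$ enters, through an integration by parts giving $|\langle Z^0|f_n\rangle|\lesssim \lambda_n^{-1}$), and then obtains the $\varepsilon^{-1}$ inside the square by counting the $\sim 1/\varepsilon$ low modes not yet killed by $e^{-2\varepsilon\lambda_n^2 T}$. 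You instead subtract the full steady state $H^{\varepsilon}$, which turns the problem into a homogeneous semigroup problem for $W=Z-H^{\varepsilon}$ with $W_0\in D(A)$ (your verification of the compatibility conditions is exactly what the hypotheses $Z^0(0)=1$, $Z^0_x(1)=0$ provide), and you extract the $\varepsilon^{-1/2}$ by trading the $H^1$ weight $(1+\omega_n^2)$ against the parabolic decay $e^{-2\varepsilon T\omega_n^2}$ through the elementary bound $\sup_{s\ge0}(1+s)e^{-2\varepsilon Ts}\le\frac{1}{2\varepsilon T}$ for $\varepsilon\le\frac{1}{2T}$. What this buys: your estimate only uses $\|W_0\|_{L^2}$ (so in fact $\|Z^0\|_{L^2}+1$ would do, slightly weaker hypotheses than the paper's coefficientwise $H^1$ argument), and the well-posedness discussion via $W_t=e^{-tA}(-AW_0)$ is more explicit than the paper's appeal to the semigroup method. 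What it costs: your constant is $(2T)^{-1/2}(\|Z^0\|_{H^1}+1)$ and the bound is asserted only for $\varepsilon\le\frac{1}{2T}$, so strictly it depends on $T$ and degrades as $T\to0$, whereas the paper's constant is $T$-independent (its own proof also concludes only for $\varepsilon$ small enough, but with a smallness threshold independent of $T$). Since $T$ is fixed in the statement and in every later application the estimate is absorbed into an $\mathcal{O}_{\varepsilon\to0}$, this discrepancy with the literal wording ``depending only on $\|Z^0\|_{H^1}$'' is harmless, but you should state it rather than suppress it.
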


\begin{proof}
It is classical to show that system~(\ref{system.HC2}) has a unique solution in
the space $L^2((0,T); H^3(0,1)) \cap H^1((0,T);H^1(0,1))$. One can even get
more smoothness if needed. An efficient method is the semi-group method that
one can find for instance in~\cite{MR710486}.
To compute the dynamics of system~(\ref{system.HC2}), we introduce the adequate
Fourier basis of $L^2$:
\begin{equation*}
f_n(x) = \sqrt{2} \sin \left(\left( n + \frac{1}{2} \right) \pi x\right)
\quad \textrm{for} \quad n \geq 0.
\end{equation*}
Hence $f_n(0) = f_n'(1) = 0$. We will use the notation 
$\lambda_n = (n + \frac{1}{2}) \pi$. Thus, $f_n'' = - \lambda_n^2 f_n$.
Let us give the following scalar products, which can easily be computed using
integration by parts:
\begin{eqnarray}
\langle 1 | f_n \rangle & = & \frac{\sqrt{2}}{\lambda_n}, \nonumber \\
\langle H^{\varepsilon} | f_n \rangle & = & 
  \frac{\sqrt{2}\lambda_n}{\frac{H^2}{4\varepsilon^2} + \lambda_n^2}, \label{scalar1} \\
\left| \langle Z^0 | f_n \rangle \right| & \leq & 
  \frac{\sqrt{2}}{\lambda_n} \left( 1 + \frac{1}{2} 
  \left\| Z^0 \right\|_{H^1} \right). \label{scalar2}
\end{eqnarray}
In these equations $\langle \cdot | \cdot  \rangle$ denotes the standard 
scalar product in $L^2(0,1)$. 
Let us write $Z = 1 + w$. Hence $w$ will satisfy $w(t,0) = w_x(t,1) = 0$.
Easy computations lead to the following ordinary differential equations
for the components of $w$ on our Fourier basis:
\begin{equation*}
 \dot{w}_n(t) = - \varepsilon\left(\lambda_n^2 + \frac{H^2}{4\varepsilon^2}\right)w_n(t)
 - \frac{H^2}{4 \varepsilon} \langle 1 | f_n \rangle.
\end{equation*}
It is easy to see that the fixed points for these ODEs are the expected 
coefficients $\langle H^{\varepsilon} - 1 | f_n \rangle$. We can solve these 
ODEs with our initial condition:
\begin{equation*}
 w_n(t) = \alpha_n
 e^{-\varepsilon\left(\lambda_n^2 + \frac{H^2}{4\varepsilon^2}\right)t}
 + \langle H^{\varepsilon} - 1 | f_n \rangle,
\end{equation*}
where:
\begin{equation*}
 \alpha_n = \langle Z^0 | f_n \rangle - \langle H^{\varepsilon} | f_n \rangle.
\end{equation*}
Now we can estimate $Z(T, \cdot) - H^{\varepsilon}(\cdot)$:
\begin{equation*}
\left\| Z(T, \cdot) - H^{\varepsilon}(\cdot) \right\|_{H^1(0,1)}^2
= \sum_{n \geq 0} \lambda_n^2 \alpha_n^2 
e^{-2 \varepsilon\left(\lambda_n^2 + \frac{H^2}{4\varepsilon^2}\right)T}.
\end{equation*}
From the expression of $\alpha_n$, (\ref{scalar1}) and (\ref{scalar2}) we get 
the easy bound:
\begin{equation*}
 \lambda_n^2 \alpha_n^2 \leq 16 + \|Z^0\|^2_{H^1(0,1)}, 
 \quad \forall n \in \N.
\end{equation*}
Thus, we get
\begin{equation*}
\left\| Z(T, \cdot) - H^{\varepsilon}(\cdot) \right\|_{H^1(0,1)}^2
\leq \left(16 + \|Z^0\|^2_{H^1(0,1)}\right) e^{-\frac{H^2T}{2\varepsilon}} 
\sum_{n \geq 0} e^{-2 \varepsilon \lambda_n^2}.
\end{equation*}
Now we split the sum in two parts: 
$n\leq N = \lfloor1/\varepsilon\rfloor$ and $n \geq N$. We get:
\begin{eqnarray*}
\left\| Z(T, \cdot) - H^{\varepsilon}(\cdot) \right\|_{H^1(0,1)}^2
& \leq & \left(16 + \|Z^0\|^2_{H^1(0,1)}\right) e^{-\frac{H^2T}{2\varepsilon}} \left( N + 
\sum_{k \geq 0} e^{-2 \varepsilon (N + k + \frac{1}{2})^2 \pi^2} \right) \\
& \leq & \left(16 + \|Z^0\|^2_{H^1(0,1)}\right) e^{-\frac{H^2T}{2\varepsilon}} \left( N + 
\frac{1}{1-e^{-4 \varepsilon N \pi^2}} e^{-2\varepsilon N^2 \pi^2} \right).
\end{eqnarray*}
Hence, for $\varepsilon$ small enough, we have:
\begin{equation*}
\left\| Z(T, \cdot) - H^{\varepsilon}(\cdot) \right\|_{H^1(0,1)}^2 \leq
\left(\frac{1}{\varepsilon} + 1\right) 
\left(16 + \|Z^0\|^2_{H^1(0,1)}\right) e^{-\frac{H^2T}{2\varepsilon}}.
\end{equation*} 
This concludes the proof of Lemma~\ref{lemma.HC}.
\end{proof}

Now we can prove Lemma~\ref{lemma.settling}.

\begin{proof}[Proof of Lemma~\ref{lemma.settling}]
\textbf{Definition of the control:} \quad
Using Lemma~\ref{lemma.HC}, we start by considering the solution $Z \in \Xdeux$ 
to system~(\ref{system.HC2}) with the initial data~(\ref{def.Z0}). Since 
$Z_0(\cdot) > 0$, the usual strong maximum principle (see~\cite{MR1707279}) 
guarantees that $Z(t,x) > 0$. Thus, we can define:
\begin{equation}
\label{def.bary}
\bar{y}(t,x) = - 2 \varepsilon \frac{Z_x(t,x)}{Z(t,x)}.
\end{equation}
Hence $\bar{y} \in X$ is a solution to~(\ref{burgers_epsilon}) with initial data 
$\varepsilon y_0$ and boundary control $\bar{v}(t) = - 2 \varepsilon Z_x(t, 0)$. 
Since $Z \in \Xdeux$, we can show that its boundary trace $Z_x(t, 0)$ belongs to 
$H^{3/4}(0,T)$. Hence $\bar{v} \in H^{3/4}(0,T)$.

\skipline

\textbf{Proof of an $L^\infty$ bound on the solution:} \quad
If $\varepsilon$ is small enough, then $\varepsilon \|y_0\|_{\infty} \leq H$. 
Moreover, we know that $\bar{v} \in H^{3/4}(0,T)$. Hence, 
$\bar{v} \in \mathcal{C}^0[0,T]$. Assume that $\sup_{[0,T]} \bar{v} > H$. Let
$T_0$ be a time such that $\bar{v}(T_0) = \sup_{[0,T]} \bar{v} > H$. On the
one hand, by the comparison principle from Lemma~\ref{lemma.comparison}, we 
know that:
\begin{equation}
\label{ineq.HC}
\bar{y} \leq \bar{v}(T_0) \quad \text{on} \quad (0,T) \times (0,1).
\end{equation}
On the other hand, we recall relation~(\ref{relation.HC}):
\begin{equation*}
\bar{y}_x(t,0) = \frac{1}{2 \varepsilon} \left( \bar{y}^2(t,0) - H^2\right).
\end{equation*}
Hence, since $\bar{v}(T_0) > 0$, we get
$\bar{y}_x(T_0,0) > 0$. Thus,there exists $x > 0$ such that 
$\bar{y}(T_0,x) > \bar{v}(T_0) = \sup_{[0,T]} \bar{v}$.
This is in contradiction with assertion~(\ref{ineq.HC}).
Hence, if $\varepsilon$ is small enough, 
$\bar{v}(\cdot) \leq H$ and $\bar{y}(T, \cdot) \leq H$.

\skipline

\textbf{Derivation of the $L^2$ estimate at time $T$:} \quad
Now we want to prove estimate~(\ref{estimate.lemma.settling}) from Lemma~
\ref{lemma.settling}. We want to use estimate~(\ref{estimate.lemma.HC}) from
Lemma~\ref{lemma.HC}. We perform the following computation at time $T$ and for 
any $x \in [0,1]$:
\begin{eqnarray*}
\left| \bar{y} - h^{\varepsilon} \right| 
=  2 \varepsilon \left| \frac{Z_x}{Z} - 
\frac{H^{\varepsilon}_x}{H^{\varepsilon}} \right| 
& = & 2\varepsilon \left| \frac{ Z \left( Z_x - H^{\varepsilon}_x \right) +
  Z_x \left( H^{\varepsilon} - Z \right) }
  { Z H^{\varepsilon} } \right| \\
& \leq & 2 \varepsilon \left| \frac{Z_x - H^{\varepsilon}_x}{H^{\varepsilon}} \right|
+ 2 \varepsilon \left| \frac{Z_x}{Z} \right| \cdot \left| 
\frac{Z - H^{\varepsilon}}{H^{\varepsilon}} \right|.
\end{eqnarray*}
Thus, we get:
\begin{equation*}
\left\| \bar{y}(T, \cdot) - h^{\varepsilon}(\cdot) \right\|_{L^2(0,1)}
\leq \left (2 \varepsilon + \|\bar{y}(T, \cdot)\|_{\infty}\right) \times
\sup_{[0,1]}\frac{1}{H^{\varepsilon}} \times
\left\| Z(T, \cdot) - H^{\varepsilon}(\cdot)\right\|_{H^1(0,1)}.
\end{equation*}
Now we use that $\| \bar{y}(T, \cdot) \|_{\infty} \leq H$ and
$\sup_{[0,1]} 1/H^{\varepsilon} \leq e^{+H/2\varepsilon}$. Hence
, using also (\ref{estimate.lemma.HC}),
\begin{equation*}
\left\| \bar{y}(T, \cdot) - h^{\varepsilon}(\cdot) \right\|_{L^2(0,1)} \leq
\frac{1}{\sqrt{\varepsilon}} \left(2 \varepsilon + H \right) 
C(Z^0) e^{-\frac{H}{4 \varepsilon}(HT - 2)}.
\end{equation*}
This estimate concludes the proof of Lemma~\ref{lemma.settling}.
\end{proof}

\begin{remark}
In Lemma~\ref{lemma.settling}, we take an initial data $y_0 \in H^1_0(0,1)$. 
This is a technical assumption that enables us to use stronger solutions.
We will get rid of it later on, by letting the Burgers' equation smooth our 
real initial data which is only in $L^2(0,1)$.
\end{remark}

\subsection{Second step: going back to the null state}

Once we have reached the steady state $h^\varepsilon$, 
we wish to go back to the null state.
This is done by applying a suitable negative interior control $\bar{u}$. 
The control $\bar{v}$ will only be following the global movement. The intuitive
idea is to apply some negative control $\bar{u}$ on $[0,T]$ such that 
$\int_0^T u(t) dt = - H$. Thus, we hope to reach some state that is below $0$
and above a boundary residue $h^{\varepsilon} - H$. However, this last statement
is only true up to some small $L^2$ function (small as $T \rightarrow 0$).
The key will be to choose the duration $T$ of this step small enough 
(with respect to $\varepsilon$).

\begin{figure}[!ht]
\centering
\input{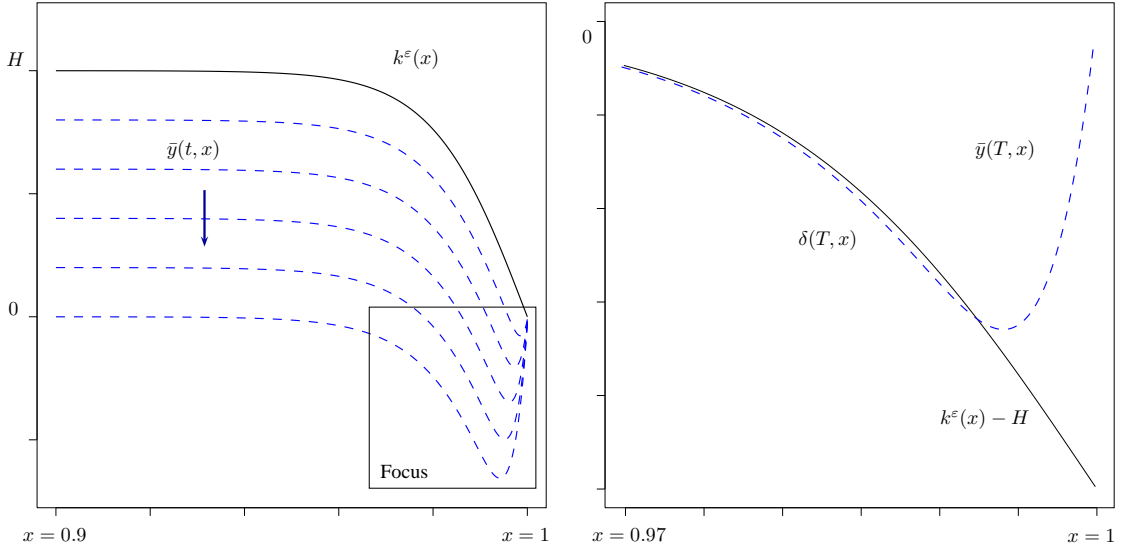}
\caption{Numerical simulation of the push-down towards
the null state and the creation of a boundary residue.
The final state $\bar{y}(T, \cdot)$ is almost above the
residue $k^{\varepsilon}(\cdot) - H$.}
\end{figure}

\begin{lemma}
\label{lemma.push}
Let $\varepsilon > 0$ and $H > 0$ be given data. 
Assume that $2 \varepsilon \leq H$. We consider the evolution of
an initial data $\bar{y}_1 \in L^2(0,1)$.
For any $T > 0$, we consider the following controls for $t\in [0,T]$:
\begin{eqnarray}
\label{def.u} \bar{u}(t) & = &  - \frac{H}{T}, \\
\label{def.v} \bar{v}(t) & = & H + \int_{0}^{t} u(s) ds.
\end{eqnarray}
Then the associated solution $\bar{y} \in X$ to system~(\ref{burgers_epsilon})
satisfies:
\begin{equation}
 \bar{y}(T, \cdot) - k^{\varepsilon}(\cdot) + H 
 \geq \delta(T, \cdot),
\end{equation}
where $\delta \in X$ is the solution to some Burgers-like system given below
and is such that:
\begin{equation}
\left\| \delta(T, \cdot) \right\|_{L^2} \leq 
e^{H^2T/4\varepsilon} \left\| \bar{y}_1 - k^{\varepsilon} \right\|_{L^2} 
+ 2 H \left(e^{H^2T/2\varepsilon} - 1 \right).
\end{equation}
\end{lemma}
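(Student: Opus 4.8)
# Proof Plan for Lemma~\ref{lemma.push}

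\textbf{Overall strategy.} The idea is to reduce the nonlinear Burgers' dynamics to an explicitly-solvable linear heat problem by exploiting the specific structure of the controls~(\ref{def.u})--(\ref{def.v}). First I would perform a change of unknown that ``removes'' the affine drift introduced by $\bar u$: since $\bar u(t) = -H/T$ is constant, setting $z(t,x) = \bar y(t,x) - \left(H - \frac{Ht}{T}\right)$ turns the evolution equation $\bar y_t + \bar y \bar y_x - \varepsilon \bar y_{xx} = \bar u$ into $z_t + z z_x + \big(H - \tfrac{Ht}{T}\big) z_x - \varepsilon z_{xx} = 0$, with boundary data $z(t,0) = \bar v(t) - (H - Ht/T) = 0$ by the very choice~(\ref{def.v}), and $z(t,1) = -(H - Ht/T)$. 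The point of this is that $z$ now vanishes on the left boundary, which is what makes the Cole--Hopf transform clean there; however, the nonhomogeneous right boundary condition and the time-dependent transport term mean $z$ itself is not exactly a heat solution, so a direct explicit computation is not available. Instead I would use the comparison principle (Lemma~\ref{lemma.comparison}) to trap $\bar y$ from below by a subsolution whose dynamics \emph{is} explicitly computable.

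\textbf{Construction of the comparison function $\delta$.} The natural candidate is the function measuring the discrepancy from the steady state $k^{\varepsilon}$, shifted by the control-induced drift. Concretely, I expect $\delta$ to be defined as the solution to a linear (or Cole--Hopf-linearizable) system obtained by linearizing around $k^\varepsilon - H + (H - Ht/T)$; the term $k^\varepsilon - H$ appearing in the statement is exactly the boundary residue predicted heuristically in the text (``below $0$ and above $h^\varepsilon - H$''). The claimed inequality $\bar y(T,\cdot) - k^\varepsilon(\cdot) + H \geq \delta(T,\cdot)$ should then follow by applying Lemma~\ref{lemma.comparison} to the pair $(\bar y, \hat y)$ where $\hat y$ is chosen so that $\bar y - \hat y$ (after the drift subtraction) satisfies a system for which $\delta$ is a subsolution; one checks the ordering hypotheses on initial and boundary data hold because $\bar y_1 \in L^2$ is arbitrary but the construction of $\delta$ carries $\bar y_1 - k^\varepsilon$ as initial datum, $\bar u$ is matched exactly, and the boundary values agree by~(\ref{def.v}). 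The hypothesis $2\varepsilon \le H$ is used to guarantee $k^\varepsilon$ (equivalently $K$, via $K\tanh(K/2\varepsilon)=H$) is well-defined and that the relevant exponential estimates go the right way.

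\textbf{The $L^2$ estimate on $\delta(T,\cdot)$.} Once $\delta$ is identified as a solution of a Burgers-like system with a forcing/potential term of size $H^2/(4\varepsilon)$ (this is precisely the coefficient that appeared in the Cole--Hopf heat equation~(\ref{system.HC2}), $-\frac{H^2}{4\varepsilon}Z$), I would derive a standard $L^2$-energy inequality: multiply the $\delta$-equation by $\delta$, integrate by parts on $(0,1)$, absorb the viscous and transport terms, and retain the potential term, obtaining $\frac{d}{dt}\|\delta\|_{L^2}^2 \le \frac{H^2}{2\varepsilon}\|\delta\|_{L^2}^2 + (\text{boundary contributions})$. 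The boundary contributions come from the inhomogeneous right-boundary condition $z(t,1) = -(H - Ht/T)$, whose modulus is bounded by $H$; tracking these through Grönwall's lemma yields the two terms in the claimed bound --- the homogeneous part $e^{H^2T/4\varepsilon}\|\bar y_1 - k^\varepsilon\|_{L^2}$ (square root of $e^{H^2T/2\varepsilon}$ applied to the initial-datum term) and the inhomogeneous Duhamel part $2H(e^{H^2T/2\varepsilon}-1)$.

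\textbf{Main obstacle.} The delicate point is \emph{not} the energy estimate, which is routine, but rather verifying that $\delta$ as just described is genuinely a subsolution for the comparison principle, i.e.\ getting the boundary traces and the sign of the nonlinear remainder $-\tfrac12(\bar y - \hat y)^2_x$-type terms to line up so that Lemma~\ref{lemma.comparison} applies cleanly. In particular one must be careful that subtracting the drift $(H - Ht/T)$ does not spoil the monotonicity of the data required by Lemma~\ref{lemma.comparison}, and that the right-boundary value $\bar y(t,1) = 0$ of the original Burgers' system is compatible with the boundary value imposed on $\delta$. I would handle this by writing out the system satisfied by $\bar y - k^\varepsilon + H$ explicitly, identifying the ``bad'' boundary and forcing terms, and then defining $\delta$ to solve exactly that system with the forcing replaced by a controlled-sign version --- the comparison lemma then does the rest. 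The exponential blow-up factors are harmless precisely because, in the application, $T$ will be chosen exponentially small relative to $\varepsilon$, so $H^2 T/\varepsilon \to 0$.
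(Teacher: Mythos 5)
Your skeleton (decompose around $k^\varepsilon$ plus the accumulated drift $\int_0^t \bar u$, then an $L^2$ energy estimate closed by Gr\"onwall) is the paper's, but the step where you close the estimate has a real gap. You keep the inhomogeneous right boundary value — your $z$ satisfies $z(t,1) = -(H-Ht/T)$ — and you attribute the term $2H\bigl(e^{H^2T/2\varepsilon}-1\bigr)$ to ``boundary contributions'' tracked through Gr\"onwall. But multiplying the perturbation equation by $\delta$ and integrating by parts then produces boundary terms of the type $\varepsilon\,\delta_x(t,1)\,\delta(t,1)$ (plus transport and cubic traces), which involve the trace $\delta_x(t,1)$; this is not controlled by $\|\delta(t,\cdot)\|_{L^2}$, so the differential inequality does not close and Gr\"onwall cannot be applied. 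The missing idea is to remove the boundary inhomogeneity \emph{before} the energy estimate: the paper first applies Lemma~\ref{lemma.comparison} to $\bar y$ and the solution $z$ of the same Burgers system in which only the right boundary datum is lowered from $0$ to $\bar v(t)-H=\int_0^t\bar u\leq 0$ (all other data unchanged), giving $\bar y \geq z$; then $\delta := z - k^\varepsilon - \int_0^t\bar u$ is an \emph{exact} remainder — no subsolution verification for $\delta$ is needed, contrary to what you flag as the main obstacle — and it satisfies homogeneous Dirichlet conditions at both ends, precisely because $k^\varepsilon(0)=H$ and $k^\varepsilon(1)=0$ match the data of $z$ up to the drift. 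This is why the statement uses $k^\varepsilon$ rather than $h^\varepsilon$.

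With zero boundary data the energy identity has no boundary terms: the cubic term $\int_0^1\delta^2\delta_x$ and the term $\int_0^1\delta\delta_x$ vanish, and the source producing $2H\bigl(e^{H^2T/2\varepsilon}-1\bigr)$ is the \emph{interior} term $-\bigl(\int_0^t\bar u\bigr)\int_0^1\delta\,(k^\varepsilon)_x$, bounded by $H\,\|(k^\varepsilon)_x\|_\infty\|\delta\|_{L^2}$. This is also where $2\varepsilon\leq H$ enters: it yields $K\leq H/\tanh(1)$, hence $\|(k^\varepsilon)_x\|_\infty\leq K^2/(2\varepsilon)\leq H^2/\varepsilon$; the hypothesis is not about well-posedness of $k^\varepsilon$ (the relation $K\tanh(K/2\varepsilon)=H$ has a solution for all $\varepsilon,H>0$). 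Similarly, the Gr\"onwall coefficient is governed by $\|(k^\varepsilon)_x\|_\infty$, not by the Cole--Hopf potential $H^2/(4\varepsilon)$ you invoke: no Cole--Hopf transform is used in this proof, and the $\delta$-system is genuinely nonlinear (it contains $\delta\delta_x$), which the energy method absorbs only thanks to the homogeneous boundary conditions you would need to have arranged first.
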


\begin{proof} 
Let $T >0$ and consider the controls defined by~(\ref{def.u}) and~(\ref{def.v}).
Let us consider the associated solution $\bar{y}\in X$ to 
(\ref{burgers_epsilon}). 
We compare $\bar{y}$ to the solution $z \in X$ to the following system:
\begin{equation}
	\label{burgers.z}
	\left\{
	\begin{array}{rll}
		z_t + z z_x - \varepsilon z_{xx} & = \bar{u}(t)
		& \quad \mbox{in} \quad (0,T) \times (0,1), \\
		z(t, 0) & = \bar{v}(t) & \quad \mbox{in} \quad (0,T), \\
		z(t, 1) & = \bar{v}(t) - H & \quad \mbox{in} \quad (0,T), \\		
		z(0, x) & = \bar{y}_0(x) & \quad \mbox{in} \quad (0,1).
	\end{array}
	\right.
\end{equation}
The comparison principle from Lemma~\ref{lemma.comparison} tells us that 
$y(T, \cdot) \geq z(T, \cdot)$. Now we want to derive precise estimates for the
solution $z \in X$. We write:
\begin{equation}
z(t,x) = k^{\varepsilon}(x) + \int_0^t \bar{u}(s)ds + \delta(t, x),
\end{equation}
where $\delta \in X$ is thus the solution to the following system:
\begin{equation}
	\label{burgers.delta}
	\left\{
	\begin{array}{rll}
		\delta_t - \varepsilon \delta_{xx} 
		+ k^{\varepsilon} \delta_x + \left(\delta + \int_0^t \bar{u}(s)ds\right)
		 (k^{\varepsilon} + \delta)_x
		& = 0
		& \quad \mbox{in} \quad (0,T) \times (0,1), \\
		\delta(t, 0) & = 0 &	\quad \mbox{in} \quad (0,T), \\
		\delta(t, 1) & = 0 &	\quad \mbox{in} \quad (0,T), \\
		\delta(0, x) & = \bar{y}_1(x) - k^{\varepsilon}(x) & 
			\quad \mbox{in} \quad (0,1).
	\end{array}
	\right.
\end{equation}
Note that it is convenient in this proof to use $k^\varepsilon$ in order
to get exact zero boundary conditions $\delta(t,0) = \delta(t,1) = 0$.
We multiply the evolution equation of~(\ref{burgers.delta}) by $\delta$ and
integrate by parts for $x \in [0,1]$ to get a $L^2$-energy estimate on 
$\delta$:
\begin{eqnarray*}
\frac{1}{2}\frac{d}{dt}\int_0^1 \delta^2 + \varepsilon \int_0^1 \delta_x^2
& = & - \int_0^1 k^{\varepsilon} \delta \delta_x - 
\int_0^1 \left(\delta + \int_0^t \bar{u}(s)ds\right) 
(k^{\varepsilon} + \delta)_x \delta \\
& = & \frac{1}{2} \int_0^1 \delta^2 (k^{\varepsilon})_x -
\int_0^1 \left(\delta + \int_0^t \bar{u}(s)ds\right) \delta (k^{\varepsilon})_x  \\
& = & - \frac{1}{2} \int_0^1 \delta^2 (k^{\varepsilon})_x
- \int_0^t \bar{u}(s)ds \int_0^1 \delta (k^{\varepsilon})_x.
\end{eqnarray*}
Now we use definition~(\ref{def.k}) and the assumption $2\varepsilon \leq H$:
\begin{equation*}
\left\|k^{\varepsilon}_x\right\|_{\infty} 
\leq \frac{K^2}{2\varepsilon}
\leq \frac{H^2}{2\varepsilon \tanh(1)^2} 
\leq \frac{H^2}{\varepsilon}.
\end{equation*}
Moreover, $\int_0^t \bar{u}(s)ds \leq H$. Hence,
\begin{equation}
\frac{1}{2} \frac{d}{dt}\int_0^1 \delta^2 \leq
\frac{H^2}{2\varepsilon} \int_0^1 \delta^2
+ \frac{H^3}{\varepsilon} \left( \int_0^1 \delta^2 \right)^{1/2}.
\end{equation}
Let us denote $E(t) = \| \delta(t, \cdot) \|_{L^2}$. Hence, one has:
\begin{equation}
\dot{E}(t) \leq \frac{H^2}{2\varepsilon} E + \frac{H^3}{\varepsilon}.
\end{equation}
From Grönwall's lemma, we get:
\begin{equation}
E(T) \leq \left( E(0) + 2 H \right) e^{H^2T/2\varepsilon} - 2 H.
\end{equation}
This concludes the proof of Lemma~\ref{lemma.push}.
\end{proof}

This is the end of the hyperbolic stage. We need to perform the reverse scaling 
of~(\ref{scaling}) to go back to $y$ (and not $\bar{y}$). We have shown that
we are above some boundary residue $h^{\varepsilon} - H$. Hence, we have to study
the evolution of the following initial data:
\begin{equation}
\label{def.Phi}
 \Phi^{\varepsilon}(x) = \frac{1}{\varepsilon} \left(h^{\varepsilon}(x) - H\right) = 
 \frac{H}{\varepsilon} \left(\tanh\left(\frac{H}{2\varepsilon}(1-x)\right) - 1 \right).
\end{equation}

One should be scared by the size of this boundary residue that we are left with.
Indeed, its $L^2$ size grows like $1/\sqrt{\varepsilon}$. However it has the 
important feature that its typical wavelength is $\varepsilon$. Hence, its 
spectral decomposition will mostly involve high frequencies that will decay 
rapidly  during the passive stage thanks to smoothing effects of Burgers' 
equation.

\section{Passive stage and dissipation of the boundary layer}
\label{section.passive}

The goal of this section is to prove the following estimate concerning the 
dissipation of the boundary residue $\Phi^{\varepsilon}$ created in the previous 
section. Indeed, although its $L^2$-norm increases as $\varepsilon$ goes to 
zero, regularization effects of the Burgers equation will dissipate it in any 
positive time $T$.

\begin{lemma}
\label{lemma.dissolution}
Let $T > 0$ be a fixed positive time. For any $\varepsilon > 0$, let us 
consider $\phi \in X$ the solution to the following system:
\begin{equation*}
	\left\{
	\begin{array}{rll}
		\phi_t + \phi \phi_x - \phi_{xx} & = 0
		& \quad \mbox{in} \quad (0,T) \times (0,1), \\
		\phi(t, 0) & = 0 &	\quad \mbox{in} \quad (0,T), \\
		\phi(t, 1) & = 0 &	\quad \mbox{in} \quad (0,T), \\
		\phi(0, x) & = \Phi^{\varepsilon}(x) & \quad \mbox{in} \quad (0,1),
	\end{array}
	\right.
\end{equation*}
where $\Phi^{\varepsilon}(x)$ is the boundary residue defined by~(\ref{def.Phi}). 
Then for any $\delta > 0$, we have the estimate:
\begin{equation}
\label{majo.findissolution}
\| \phi(T, \cdot) \|_{L^2(0,1)} = \mathcal{O}_{\varepsilon \rightarrow 0}
\left(\varepsilon^{1-\delta}\right).
\end{equation}
\end{lemma}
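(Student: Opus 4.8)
The plan is to linearise the problem completely via the Cole--Hopf substitution, exactly as in Section~\ref{section.hyperbolique}, and then read the decay off an explicit Fourier series; the $\varepsilon$-concentration of the residue is what makes all high modes die before time $T$.

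Set $a=\frac{H}{2\varepsilon}$ and introduce $\psi^{0}(x)=\exp\!\left(-\tfrac12\int_{0}^{x}\Phi^{\varepsilon}(s)\,ds\right)$. Since $\Phi^{\varepsilon}\le 0$, an explicit integration (using $\int_0^x\tanh(a(1-s))\,ds$) gives the closed form
\[
\psi^{0}(x)=\frac{\cosh\!\big(a(1-x)\big)}{\cosh a}\,e^{ax}=\frac{e^{a}+e^{a(2x-1)}}{e^{a}+e^{-a}},
\]
from which $1\le\psi^{0}\le 2$ on $[0,1]$ (the lower bound because $e^{a(2x-1)}\ge e^{-a}$ for $x\ge 0$). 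Let $\tilde\psi$ be the solution of the pure heat equation $\tilde\psi_{t}-\tilde\psi_{xx}=0$ on $(0,T)\times(0,1)$ with homogeneous Neumann conditions $\tilde\psi_{x}(t,0)=\tilde\psi_{x}(t,1)=0$ and $\tilde\psi(0,\cdot)=\psi^{0}$; by the maximum principle for the Neumann heat equation, $\tilde\psi\ge 1$, so $\tilde\psi$ never vanishes. A routine computation (the one underlying the Cole--Hopf transform) shows that $\phi:=-2\,\tilde\psi_{x}/\tilde\psi$ solves $\phi_{t}+\phi\phi_{x}-\phi_{xx}=0$: the Neumann conditions on $\tilde\psi$ become $\phi(t,0)=\phi(t,1)=0$, and the choice of $\psi^{0}$ yields $\phi(0,\cdot)=-2\psi^{0}_{x}/\psi^{0}=\Phi^{\varepsilon}$. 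By uniqueness for the Burgers system, this $\phi$ is the solution of the lemma. The one point to treat with care is that $\psi^{0}$ does not satisfy the Neumann condition at $x=0$, so there is a corner incompatibility at $(0,0)$; this is harmless, since the boundary condition is only imposed for $t>0$ (which is exactly the meaning of ``$\phi(t,0)=0$ in $(0,T)$'') and parabolic smoothing places $\phi$ in $X$.

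Since $\tilde\psi\ge 1$ we have $|\phi(T,x)|\le 2|\tilde\psi_{x}(T,x)|$, so it suffices to bound $\|\tilde\psi_{x}(T,\cdot)\|_{L^{2}}$. Expanding $\psi^{0}$ on the cosine basis $1,\cos(n\pi x)$ and noting that the constant part of $\psi^{0}$ contributes nothing to the coefficients with $n\ge 1$, one computes
\[
c_{n}:=2\int_{0}^{1}\psi^{0}(x)\cos(n\pi x)\,dx=\frac{4a\big((-1)^{n}e^{a}-e^{-a}\big)}{(e^{a}+e^{-a})(4a^{2}+n^{2}\pi^{2})},\qquad |c_{n}|\le\frac{4a}{4a^{2}+n^{2}\pi^{2}}=\frac{2H\varepsilon}{H^{2}+n^{2}\pi^{2}\varepsilon^{2}} .
\]
Then $\tilde\psi(T,x)=c_{0}+\sum_{n\ge 1}c_{n}e^{-n^{2}\pi^{2}T}\cos(n\pi x)$ and $\|\tilde\psi_{x}(T,\cdot)\|_{L^{2}}^{2}=\tfrac12\sum_{n\ge 1}n^{2}\pi^{2}c_{n}^{2}e^{-2n^{2}\pi^{2}T}$. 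Split this sum at $n\sim 1/\varepsilon$: for $n\le 1/\varepsilon$ the denominator is $\ge H^{2}$, so the term is $\le\frac{4\pi^{2}n^{2}\varepsilon^{2}}{H^{2}}$, and $\sum_{n\ge 1}n^{2}\pi^{2}e^{-2n^{2}\pi^{2}T}$ is a finite constant depending only on $T$, whence this part is $\mathcal{O}(\varepsilon^{2})$; for $n>1/\varepsilon$ the denominator is $\ge n^{4}\pi^{4}\varepsilon^{4}$, so the term is $\le\frac{4H^{2}}{n^{2}\pi^{2}\varepsilon^{2}}$, while $e^{-2n^{2}\pi^{2}T}\le e^{-2\pi^{2}T/\varepsilon^{2}}$, so this part is $\mathcal{O}\!\big(\varepsilon^{-2}e^{-c(T)/\varepsilon^{2}}\big)$, which is $o(\varepsilon^{k})$ for every $k$. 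Hence $\|\phi(T,\cdot)\|_{L^{2}}^{2}=\mathcal{O}(\varepsilon^{2})$, i.e. $\|\phi(T,\cdot)\|_{L^{2}}=\mathcal{O}(\varepsilon)$, which is even stronger than~(\ref{majo.findissolution}).

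The only genuinely delicate step is the first one: justifying that $\phi=-2\tilde\psi_{x}/\tilde\psi$ is indeed the $X$-solution of the Burgers system in spite of the corner incompatibility, and that the closed form for $\psi^{0}$ (and hence for $c_{n}$) is correct. Everything afterwards is the elementary Fourier estimate above, whose whole force comes from the $\varepsilon$-dependence of the bound on $|c_{n}|$: the crude bound $|c_{n}|\le 1/(n\pi)$ would only give $\mathcal{O}(1)$, so the explicit evaluation of $c_{n}$ is essential.
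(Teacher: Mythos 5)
Your proof is correct and follows essentially the same route as the paper: Cole--Hopf reduction to a homogeneous Neumann heat problem, the lower bound $\tilde\psi\ge 1$ giving $|\phi(T,\cdot)|\le 2|\tilde\psi_x(T,\cdot)|$, and a Fourier expansion split at frequencies of order $1/\varepsilon$ (you expand $\tilde\psi$ in cosines where the paper expands $z_x$ in sines, and you build $\phi$ from the heat solution and invoke uniqueness rather than transforming $\phi$ forward --- both harmless variants). By keeping the factor $e^{-2n^2\pi^2 T}$ on the low modes instead of discarding it as the paper does, you even obtain the stronger rate $\mathcal{O}(\varepsilon)$ in place of the stated $\mathcal{O}(\varepsilon^{1-\delta})$ in~(\ref{majo.findissolution}); the only slip is cosmetic: the significant corner incompatibility of $\psi^0$ with the Neumann condition is at $x=1$ (where $\psi^0_x(1)\approx H/\varepsilon$), not at $x=0$, but your argument never uses compatibility anyway.
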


\begin{figure}[!ht]
\centering
\input{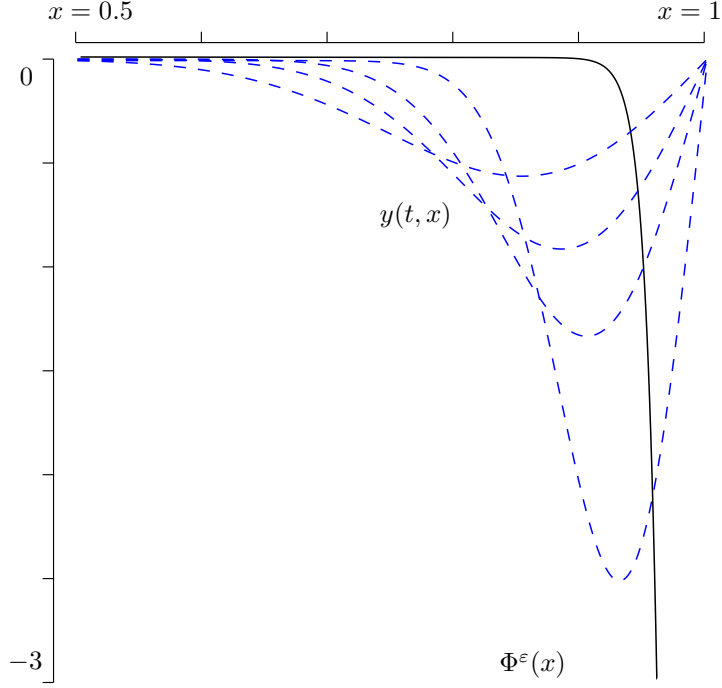}
\caption{Numerical simulation of the dissipation of the boundary residue
$\Phi^\varepsilon(\cdot)$. At time $t = 0$, the boundary residue was of size
$\|\Phi^\varepsilon(\cdot)\|_{\infty} = 100$.}
\end{figure}

\subsection{Cole-Hopf transform}

Once again, we are going to use the Cole-Hopf transform to derive precise 
estimates. Therefore, let us introduce the following change of unknown for 
$x \in [0,1]$ and $t \in [0,T]$:
\begin{equation*}
 z(t,x) = \exp\left( - \frac{1}{2} \int_0^x \phi(t,s) ds \right).
\end{equation*}
This leads to the following heat system for the new unknown $z$: 
\begin{equation}
\label{system.hc}
\left\{
	\begin{array}{rll}
		z_t - z_{xx} & = 0
		& \quad \mbox{on} \quad (0,T) \times (0,1), \\
		z_x(t, 0) & = 0 &	\quad \mbox{on} \quad (0,T), \\
		z_x(t, 1) & = 0 & \quad \mbox{on} \quad (0,T), \\
		z(0, x) & = Z^{\varepsilon}(x) & \quad \mbox{on} \quad (0,1),
	\end{array}
	\right.
\end{equation}
where the initial data $Z^{\varepsilon}$ is computed from the initial data 
$\Phi^{\varepsilon}$: 
\begin{eqnarray}
 Z^{\varepsilon}(x) & = & 
 \exp\left( - \frac{1}{2} \int_0^x \Phi^{\varepsilon}(s) ds \right) \nonumber \\
 & = & \frac{1+e^{\frac{H}{\varepsilon}(x-1)}}{1+e^{-\frac{H}{\varepsilon}}}.
 \label{def.Z}
\end{eqnarray}

An important remark is that $\Phi^{\varepsilon} \leq 0$. Thus, 
by the comparison principle from Lemma~\ref{lemma.comparison},
$\phi \leq 0$ on $[0,T]\times[0,1]$
and $z \geq 1$ on $[0,T]\times[0,1]$. The backwards Cole-Hopf transform will
give us:
\begin{equation*}
 \phi(T) = - 2 \frac{z_x(T)}{z(T)}.
\end{equation*}
Hence, using the fact that $z \geq 1$, we will have the following estimate: 
\begin{equation}
  \label{majo_retour}
  \left| \phi(T, \cdot) \right| \leq 2 \left| z_x(T, \cdot) \right|.
\end{equation}
All we have to do is to study the $L^2$-norm of $z_x(T)$.
To ease computations, let us introduce:
\begin{equation}
\label{relation.zw}
w = (1+e^{-\frac{H}{\varepsilon}}) z_x,
\end{equation}
such that $w$ is the solution to:
\begin{equation*}
	\left\{
	\begin{array}{rll}
		w_t - w_{xx} & = 0
		& \quad \mbox{on} \quad (0,T) \times (0,1), \\
		w(t, 0) &= 0 &	\quad \mbox{on} \quad (0,T), \\
		w(t, 1) &= 0 & \quad \mbox{on} \quad (0,T), \\
		w(0, x) &= \frac{H}{\varepsilon} 
			e^{\frac{H}{\varepsilon}(x-1)} & \quad \mbox{on} \quad (0,1).
	\end{array}
	\right.
\end{equation*}

\subsection{Fourier series decomposition}

We use Fourier series to compute $w(T, \cdot)$. We will use the following 
Hilbert basis of $L^2$ made of the eigen-functions for the Laplace operator 
with Dirichlet boundary conditions on $[0, 1]$:
\begin{equation*}
 e_n(x) = \sqrt{2} \sin (n\pi x) \quad \textrm{for} \quad
 n \geq 1.
\end{equation*}
Let us compute the decomposition of $w(0, \cdot)$ on this basis.
We integrate by parts twice: 
\begin{eqnarray*}
 \langle w(0, \cdot) | e_n \rangle 
 & = & \sqrt{2} \frac{H}{\varepsilon} e^{-\frac{H}{\varepsilon}}
 \int_0^1 \sin (n\pi x) e^{\frac{H}{\varepsilon}x} dx \\
 & = & \sqrt{2} e^{-\frac{H}{\varepsilon}} 
 \left[ \sin (n\pi x) e^{\frac{H}{\varepsilon}x} \right]_0^1
 - \sqrt{2} n \pi e^{-\frac{H}{\varepsilon}} \int_0^1 \cos (n \pi x) e^{\frac{H}{\varepsilon}x} \\
 & = & - \frac{\varepsilon \sqrt{2}}{H} n \pi e^{-\frac{H}{\varepsilon}} \left[ \cos (n\pi x) e^{\frac{H}{\varepsilon}x} \right]_0^1
 - \left(\frac{\varepsilon n \pi}{H}\right)^2 
 \langle w(0, \cdot) | e_n \rangle \\
 & = & \frac{\sqrt{2}}{H}\frac{\varepsilon n \pi}{1 + \frac{\varepsilon^2 n^2 \pi^2}{H^2}} 
 \left( (-1)^{n+1} + e^{-\frac{H}{\varepsilon}} \right).
\end{eqnarray*}
Now we can estimate the size of $w(T, \cdot)$ in $L^2(0,1)$:
\begin{eqnarray*}
 \left\|w(T, \cdot)\right\|_{L^2}^2 & = &
 \sum_{n\geq 1} \left(\langle w(0, \cdot) | e_n \rangle \cdot
 	e^{-n^2 \pi^2 T}\right)^2 \\
 & \leq & 8 \sum_{n\geq 1} \frac{\varepsilon^2 n^2 \pi^2 H^{-2}}
 {\left( 1 + \varepsilon^2 n^2 \pi^2 H^{-2} \right)^2} e^{-2 n^2 \pi^2 T}. \\
\end{eqnarray*}
For $\alpha \in \R$, the following easy inequality holds:
\begin{equation*}
 \frac{\alpha^2}{(1+\alpha^2)^2} \leq \min\left( \alpha^2, \frac{1}{4}\right).
\end{equation*}
Hence we split the sum and cut at a level $N(\varepsilon)$:
\begin{eqnarray*}
 \left\|w(T, \cdot)\right\|_{L^2}^2
 & \leq & 8 \sum_{n = 1}^{N-1} \frac{\varepsilon^2 n^2 \pi^2}{H^2}
 + 2 \sum_{k\geq 0} e^{-2(N+k)^2 \pi^2 T} \\
 & \leq & \frac{8 \varepsilon^2 N^3 \pi^2}{3 H^2}
 + 2 e^{-2 N^2 \pi^2 T} \sum_{k\geq 0} e^{-4 N k \pi^2 T} \\
 & \leq & \frac{8 \varepsilon^2 N^3 \pi^2}{3 H^2}
 + 2 \frac{e^{-2 N^2 \pi^2 T}}{1 - e^{-4 N \pi^2 T}}.
\end{eqnarray*}

We want to choose $N(\varepsilon) \rightarrow +\infty$ such that
$\varepsilon^2 N^3 \rightarrow 0$. For instance, we can take 
$N = \lfloor \varepsilon^{-\eta} \rfloor$, where $\eta > 0$ is small enough.
For $\varepsilon$ small enough, we have:
\begin{equation}
\label{majo_w}
 \left\|w(T, \cdot)\right\|_{L^2}^2
   \leq \frac{8 \pi^2}{3 H^2} \varepsilon^{2-3\eta}
 + 4 e^{-2 \varepsilon^{-2\eta} \pi^2 T} 
 = \mathcal{O}\left(\varepsilon^{2-3\eta}\right). 
\end{equation}
Combining estimates~(\ref{majo_w}) and~(\ref{majo_retour}), and the
definition~(\ref{relation.zw}) we can easily deduce the estimate
(\ref{majo.findissolution}). This concludes the proof of Lemma
\ref{lemma.dissolution}. \qed

\subsection{Approximate controllability towards the null state}

First, let us prove the following technical lemma. Indeed, we have proven that the
particular boundary layer $\Phi^{\varepsilon}$ dissipates, but all we also want
to know what would happen if we were very close to it.

\begin{lemma}
\label{lemma.dissolution2}
Let us change the initial data from Lemma~\ref{lemma.dissolution} to
$\Phi^{\varepsilon}(x) + \frac{1}{\varepsilon} \delta^\varepsilon$. We assume:
\begin{eqnarray}
\label{hyp.negative}
\Phi^{\varepsilon}(x) + \frac{1}{\varepsilon} \delta^\varepsilon
& \leq & 0, \\
\left\| \delta^\varepsilon(\cdot) \right\|_{L^2(0,1)} & = &
\mathcal{O}_{\varepsilon \rightarrow 0}(\varepsilon^3). 
\label{condition.delta}
\end{eqnarray}
Then, the conclusion of Lemma~\ref{lemma.dissolution} still holds.
\end{lemma}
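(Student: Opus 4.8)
The plan is to adapt the Cole-Hopf computation of Lemma~\ref{lemma.dissolution} to the perturbed initial data. Set $\psi^\varepsilon = \Phi^\varepsilon + \tfrac{1}{\varepsilon}\delta^\varepsilon$ and let $\phi$ now be the solution of the Burgers system with $\phi(0,\cdot) = \psi^\varepsilon$. Since we assume $\psi^\varepsilon \leq 0$, the comparison principle (Lemma~\ref{lemma.comparison}) again gives $\phi \leq 0$ on $[0,T]\times[0,1]$, hence for the associated Cole-Hopf transform $z(t,x) = \exp\bigl(-\tfrac12\int_0^x \phi(t,s)\,ds\bigr)$ we still have $z \geq 1$ and the pointwise bound $\left| \phi(T,\cdot) \right| \leq 2\left| z_x(T,\cdot) \right|$. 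So it suffices to estimate $\|z_x(T,\cdot)\|_{L^2}$, where $z$ solves the heat equation with Neumann boundary conditions on $[0,1]$ and initial datum $Z^{\psi}(x) = \exp\bigl(-\tfrac12\int_0^x \psi^\varepsilon(s)\,ds\bigr)$.

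The key point is to split $Z^\psi$ multiplicatively: write $Z^\psi(x) = Z^\varepsilon(x) \cdot R^\varepsilon(x)$ where $Z^\varepsilon$ is the unperturbed Cole-Hopf datum from~(\ref{def.Z}) and $R^\varepsilon(x) = \exp\bigl(-\tfrac{1}{2\varepsilon}\int_0^x \delta^\varepsilon(s)\,ds\bigr)$. By Cauchy-Schwarz, $\left|\int_0^x \delta^\varepsilon\right| \leq \|\delta^\varepsilon\|_{L^2} = \mathcal{O}(\varepsilon^3)$, so $R^\varepsilon = 1 + \mathcal{O}(\varepsilon^2)$ in $L^\infty$, and likewise $(R^\varepsilon)_x = -\tfrac{1}{2\varepsilon}\delta^\varepsilon R^\varepsilon$ has $L^2$-norm $\mathcal{O}(\varepsilon^2)$. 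Hence $Z^\psi = Z^\varepsilon + g^\varepsilon$ where $g^\varepsilon = Z^\varepsilon(R^\varepsilon - 1)$ satisfies $\|g^\varepsilon\|_{H^1(0,1)} = \mathcal{O}(\varepsilon^{?})$ — here one must be careful because $Z^\varepsilon$ and $(Z^\varepsilon)_x$ are only $\mathcal{O}(1)$ and $\mathcal{O}(1/\varepsilon)$ respectively (the latter concentrated near $x=1$), so a crude bound gives $\|g^\varepsilon\|_{H^1} = \mathcal{O}(\varepsilon)$, which is the weakest estimate and the one to watch. The solution $z$ of the Neumann heat equation is linear in the initial datum: $z = z_{\mathrm{main}} + z_{\mathrm{pert}}$, where $z_{\mathrm{main}}$ is exactly the solution analyzed in Lemma~\ref{lemma.dissolution} (so $\|(z_{\mathrm{main}})_x(T,\cdot)\|_{L^2} = \mathcal{O}(\varepsilon^{1-\delta})$ for every $\delta>0$), and $z_{\mathrm{pert}}$ has initial datum $g^\varepsilon$.

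It remains to control $\|(z_{\mathrm{pert}})_x(T,\cdot)\|_{L^2}$. Here I would use the smoothing of the heat semigroup: for fixed $T>0$, $\|(z_{\mathrm{pert}})_x(T,\cdot)\|_{L^2} \leq C_T \|g^\varepsilon\|_{L^2} \leq C_T \|g^\varepsilon\|_{H^1} = \mathcal{O}(\varepsilon)$, simply because $\partial_x e^{T\Delta}$ is bounded from $L^2$ to $L^2$ for $T$ bounded away from $0$. Combining the two pieces, $\|z_x(T,\cdot)\|_{L^2} = \mathcal{O}(\varepsilon^{1-\delta}) + \mathcal{O}(\varepsilon) = \mathcal{O}(\varepsilon^{1-\delta})$, and then $\|\phi(T,\cdot)\|_{L^2} \leq 2\|z_x(T,\cdot)\|_{L^2} = \mathcal{O}(\varepsilon^{1-\delta})$, which is precisely~(\ref{majo.findissolution}).

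The main obstacle I expect is the bookkeeping of how the $\mathcal{O}(\varepsilon^3)$ bound on $\delta^\varepsilon$ propagates through the Cole-Hopf transform: one must check that multiplying by $Z^\varepsilon$ (whose derivative blows up like $1/\varepsilon$ near $x=1$) does not destroy enough powers of $\varepsilon$ to break the final estimate. With the stated hypothesis $\|\delta^\varepsilon\|_{L^2} = \mathcal{O}(\varepsilon^3)$ there is a comfortable margin — even a loss of one power of $\varepsilon$ from the transform and another from the semigroup derivative leaves $\mathcal{O}(\varepsilon)$, which is dominated by $\mathcal{O}(\varepsilon^{1-\delta})$ — so the argument should close without delicacy, but writing it cleanly requires being explicit about which norm of $g^\varepsilon$ one uses and that $C_T$ is independent of $\varepsilon$.
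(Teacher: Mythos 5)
Your proposal is correct and follows essentially the same route as the paper: the multiplicative splitting $Z^{\psi}=Z^{\varepsilon}R^{\varepsilon}$ is exactly the paper's decomposition $Z^{\varepsilon}+Z^{\varepsilon}\bigl(\exp\bigl(-\tfrac{1}{2\varepsilon}\int_0^x\delta^{\varepsilon}\bigr)-1\bigr)$, the perturbation is likewise bounded by $\mathcal{O}(\varepsilon)$ in $H^1$, and linearity of the Neumann heat system plus $z\geq 1$ (from the negativity hypothesis and Lemma~\ref{lemma.comparison}) closes the argument as in Lemma~\ref{lemma.dissolution}. The only difference is cosmetic: you propagate the perturbation via the smoothing bound $\|\partial_x e^{T\Delta}\|_{L^2\to L^2}\leq C_T$, while the paper implicitly uses that the heat flow does not increase the $H^1$ norm of the perturbed part; both yield the same $\mathcal{O}(\varepsilon^{1-\delta})$ conclusion.
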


\begin{proof}
We follow the same scheme than for the proof of Lemma~
\ref{lemma.dissolution}. Hence, we start by taking the Cole-Hopf transform of 
the new initial data 
$\Phi^{\varepsilon}(x) + \frac{1}{\varepsilon} \delta^\varepsilon$. 
Therefore, after the Cole-Hopf transform we have the following initial data:
\begin{equation*}
  Z^{\varepsilon}(x) + Z^{\varepsilon}(x) \cdot 
  \left(\exp\left(-\frac{1}{2\varepsilon}\int_0^x\delta^\varepsilon\right)-1\right).
\end{equation*}
From our previous computation~(\ref{def.Z}) of $Z^{\varepsilon}$, we know that
$| (Z^{\varepsilon})_x | = \mathcal{O}(1/\varepsilon)$. Hence, using condition
(\ref{condition.delta}), we have:
\begin{equation*}
\left\|
Z^{\varepsilon}(x) \cdot 
  \left(\exp\left(-\frac{1}{2\varepsilon}\int_0^x\delta^\varepsilon\right)-1\right)
\right\|_{H^1(0,1)} = \mathcal{O}_{\varepsilon \rightarrow 0}(\varepsilon).
\end{equation*}
Let us use the fact that our heat system~(\ref{system.hc}) is linear. 
Therefore, using the conclusion of Lemma~\ref{lemma.dissolution} we have:
\begin{equation*}
\| z(T, \cdot) \|_{H^1(0,1)} = 
\mathcal{O}_{\varepsilon \rightarrow 0}(\varepsilon^{1-\delta})
+ \mathcal{O}_{\varepsilon \rightarrow 0}(\varepsilon).
\end{equation*}
Once again we apply the backwards Cole-Hopf transform. We use the fact that
$z \geq 1$ (this comes from the comparison principle and the hypothesis 
(\ref{hyp.negative})). Hence, 
\begin{equation*}
\| \phi(T, \cdot) \|_{L^2(0,1)} \leq 2 \| z(T, \cdot) \|_{H^1(0,1)}.
\end{equation*}
Thus, the conclusion~(\ref{majo.findissolution}) of Lemma~
\ref{lemma.dissolution} still holds with this new initial data.
\end{proof}

Now everything is ready for us to show the following small time approximate
controllability result for system~(\ref{burgers2}). We have to combine the 
different estimates.

\begin{theorem}
\label{thm.frederic2}
Let $T, r > 0$ and $y_0 \in L^2(0,1)$ be given data. Then there exists 
$u, v \in \EnsU \times \EnsV$ such that the associated solution $y \in X$ to 
system~(\ref{burgers2}) on $[0, T]$ satisfies:
\begin{equation*}
 \left\| y(T, \cdot) \right\|_{L^2(0,1)} \leq r.
\end{equation*}
\end{theorem}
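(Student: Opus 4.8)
The plan is to assemble Theorem~\ref{thm.frederic2} by chaining together the three stages described in the introduction, tracking quantitatively how the error at time $T$ depends on the small parameter $\varepsilon$. Fix $T, r > 0$ and $y_0 \in L^2(0,1)$. First I would dispose of the regularity issue raised in the Remark after Lemma~\ref{lemma.settling}: let the Burgers system run freely (with $u = v = 0$) on a tiny initial time interval $[0, \tau]$ with $\tau = \tau(\varepsilon)$ chosen very small; by the smoothing effect of the viscous Burgers equation the state becomes $H^1_0(0,1)$ (indeed smooth) at time $\tau$, while $\|y(\tau,\cdot)\|_{L^2}$ stays bounded by $\|y_0\|_{L^2}$. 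This lets us invoke Lemma~\ref{lemma.settling}, which requires an $H^1_0$ initial datum. Alternatively one absorbs this short interval into the passive stage; either way it only costs an arbitrarily small amount of the time budget.

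Next comes the hyperbolic stage. After the scaling~(\ref{scaling}) with parameter $\varepsilon$, on the time window corresponding to $[0, \varepsilon T_1]$ (with $T_1$ a fixed fraction of $T$) I apply Lemma~\ref{lemma.settling} to steer $\bar{y}$ to within $\mathcal{O}(\varepsilon^{-1/2} e^{-\frac{H}{4\varepsilon}(HT_1 - 2)})$ in $L^2$ of the steady state $h^\varepsilon$, using $\bar{u} = 0$ and the boundary control $\bar v \le H$ produced by that lemma; here $H$ is chosen large enough (independently of $\varepsilon$) that $HT_1 > 2$, so this error is exponentially small in $1/\varepsilon$. Then, on a second window corresponding to a short physical time $T_2$ chosen small relative to $\varepsilon$ (so that $H^2 T_2 / \varepsilon$ stays bounded, in fact $o(1)$ if we want), I apply Lemma~\ref{lemma.push} with $\bar y_1$ equal to the state just reached: the final state satisfies $\bar y(\cdot) - k^\varepsilon(\cdot) + H \ge \delta(\cdot)$ with $\|\delta\|_{L^2} \le e^{H^2 T_2/4\varepsilon}\|\bar y_1 - k^\varepsilon\|_{L^2} + 2H(e^{H^2 T_2/2\varepsilon} - 1)$. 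Using $\|\bar y_1 - k^\varepsilon\|_{L^2} \le \|\bar y_1 - h^\varepsilon\|_{L^2} + \|h^\varepsilon - k^\varepsilon\|_{L^2}$ together with the exponential bound from Lemma~\ref{lemma.settling} and~(\ref{estimate.steady}), and choosing $T_2 = T_2(\varepsilon)$ so that $e^{H^2 T_2/2\varepsilon} - 1 = \mathcal{O}(\varepsilon^4)$, we get $\|\delta\|_{L^2} = \mathcal{O}(\varepsilon^4)$. Undoing the scaling, we land at a physical state at time (roughly) $\varepsilon(T_1 + T_2)$ which lies above $\Phi^\varepsilon + \frac{1}{\varepsilon}\delta^\varepsilon$ for some $\delta^\varepsilon$ with $\|\delta^\varepsilon\|_{L^2} = \mathcal{O}(\varepsilon^3)$ (after dividing by $\varepsilon$ in the scaling), and — crucially — it is below $0$, since the push-down control was chosen to bring the state under zero up to this small residue; the negativity hypothesis~(\ref{hyp.negative}) of Lemma~\ref{lemma.dissolution2} is arranged by a slight further adjustment (e.g.\ pushing a hair harder) that keeps the $L^2$ corrections of the same order.

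For the passive stage, I set $u = v = 0$ on a window of fixed physical length, say $T_3 = T/2$, remaining after the (vanishingly short) hyperbolic stage. By the comparison principle (Lemma~\ref{lemma.comparison}) the solution stays squeezed between $0$ above and the free evolution $\phi$ of the residue $\Phi^\varepsilon + \frac{1}{\varepsilon}\delta^\varepsilon$ below; by Lemma~\ref{lemma.dissolution2} we have $\|\phi(T_3, \cdot)\|_{L^2} = \mathcal{O}(\varepsilon^{1-\delta'})$ for any $\delta' > 0$. Hence $\|y(\cdot)\|_{L^2}$ at the end of the passive stage is $\mathcal{O}(\varepsilon^{1-\delta'})$, which can be made smaller than $r$ by choosing $\varepsilon$ small; this uses that the push-down and settling errors are both $o(\varepsilon)$, so they do not spoil the rate. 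This proves Theorem~\ref{thm.frederic2} as soon as we fix $\varepsilon$ small enough so that all three contributions — the exponentially small settling error, the $\mathcal{O}(\varepsilon^3)$ push-down residue, and the $\mathcal{O}(\varepsilon^{1-\delta'})$ dissipated layer — sum to at most $r$, and we check that $T_1 + T_2 + T_3 < T$ in physical time, which holds because the first two windows have length $\mathcal{O}(\varepsilon)$ and the third has length $T/2 < T$.

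The main obstacle is the bookkeeping of the interlocking smallness conditions: the window $T_2$ of the push-down must be taken small as a function of $\varepsilon$ (to keep $e^{H^2 T_2/2\varepsilon}$ controlled) yet the settling error it multiplies is only exponentially small, so one must verify that the product stays $\mathcal{O}(\varepsilon^4)$ as required by Lemma~\ref{lemma.dissolution2}; similarly one must confirm that the comparison-principle sandwich genuinely applies after all the scalings and time-shifts, in particular that the lower-bound trajectory in the passive stage really has initial datum of the form covered by Lemma~\ref{lemma.dissolution2} (negative, and an $\mathcal{O}(\varepsilon^3)$ perturbation of $\Phi^\varepsilon$ after the factor $1/\varepsilon$). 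Once these compatibility checks are done, the estimate is just the triangle inequality applied to the three stage-errors, and choosing $\varepsilon$ small finishes the proof.
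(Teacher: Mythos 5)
Your proposal is correct and follows essentially the same route as the paper's proof: a free smoothing interval to reach $H^1_0$ data, the settling stage via Lemma~\ref{lemma.settling} after the scaling~(\ref{scaling}), the push-down via Lemma~\ref{lemma.push}, and then the passive stage handled by Lemma~\ref{lemma.dissolution2} together with the comparison principle of Lemma~\ref{lemma.comparison}, concluding by taking $\varepsilon$ small. The only differences are bookkeeping choices (the paper fixes the smoothing window to length $T/3$ rather than an $\varepsilon$-dependent $\tau$, and fixes the push-down to scaled duration $\varepsilon^3$), and your way of choosing the push-down duration directly from the condition $e^{H^2 T_2/2\varepsilon}-1=\mathcal{O}(\varepsilon^4)$ is compatible with, and if anything safer for, the hypothesis~(\ref{condition.delta}) of Lemma~\ref{lemma.dissolution2}.
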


\begin{proof}
Take $T, r > 0$ and $y_0 \in L^2(0,1)$ given data.  
Let us take a small $\varepsilon > 0$ and break down
our time interval into four parts. We introduce $T_1 = T/3$, 
$T_2 = T_1 + \varepsilon$ and $T_3 = T_2 + \varepsilon^4$. 
The first part $[0,T_1]$ of length $T/3$ is
designed to smooth the initial data. The second part $[T_1,T_2]$ of length
$\varepsilon$ is the part where the settling of the
boundary layer takes place. The third part $[T_2, T_3]$ of length
$\varepsilon^4$ is the quick push down to zero. The fourth part $[T_3, T]$
of length at least $T/3$ (when $\varepsilon$ is small enough) is the passive
stage for the dissipation of the boundary layer. Let us give some details.

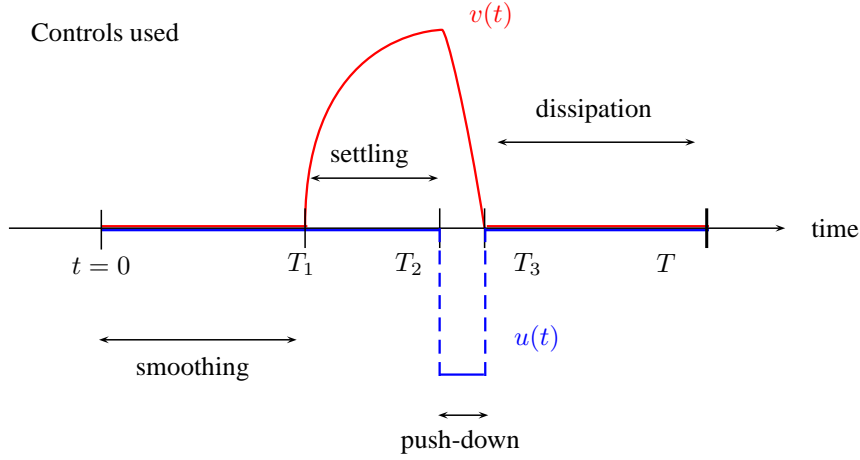
\begin{figure}[!ht]
\centering
\scalebox{1} 
{
\begin{pspicture}(0,-3.0442185)(11.30297,3.0442185)
\psbezier[linewidth=0.03,linecolor=red](3.8951595,0.04694975)(3.8951595,2.5984044)(5.6361775,2.651012)(5.695195,2.651012)(5.7542124,2.651012)(6.019792,1.5462581)(6.255862,0.0206461)
\psline[linewidth=0.03cm,linecolor=red](3.9246683,0.04694975)(1.2098601,0.04694975)
\psline[linewidth=0.03cm,linecolor=red](6.285371,0.04694975)(9.20674,0.04694975)
\psline[linewidth=0.03cm,linecolor=blue](1.2098601,-0.00565758)(5.6656866,-0.00565758)
\psline[linewidth=0.03cm,linecolor=blue](5.6561775,-1.9206097)(6.246353,-1.9206097)
\psline[linewidth=0.03cm,linecolor=blue](6.255862,-0.00565758)(9.20674,-0.00565758)
\psline[linewidth=0.04cm](9.177232,-0.24239045)(9.177232,0.28368264)
\psline[linewidth=0.02cm](6.255862,-0.24239045)(6.255862,0.28368264)
\psline[linewidth=0.02cm](5.6656866,-0.24239045)(5.6656866,0.28368264)
\psline[linewidth=0.02cm](3.8951595,-0.24239045)(3.8951595,0.28368264)
\psline[linewidth=0.02cm,arrowsize=0.05291667cm 2.0,arrowlength=1.4,arrowinset=0.4]{->}(0.0,0.0206461)(10.210039,0.0206461)
\usefont{T1}{ptm}{m}{n}
\rput(6.9435935,-1.4392189){\color{blue}$u(t)$}
\usefont{T1}{ptm}{m}{n}
\rput(6.3435936,2.8407812){\color{red}$v(t)$}
\usefont{T1}{ptm}{m}{n}
\rput(3.8435936,-0.45921874){$T_1$}
\usefont{T1}{ptm}{m}{n}
\rput(5.263593,-0.45921874){$T_2$}
\usefont{T1}{ptm}{m}{n}
\rput(6.8235936,-0.45921874){$T_3$}
\usefont{T1}{ptm}{m}{n}
\rput(8.643596,-0.43921876){$T$}
\psline[linewidth=0.03cm,linecolor=blue,linestyle=dashed,dash=0.17638889cm 0.10583334cm](5.664531,0.02078135)(5.664531,-1.9392186)
\psline[linewidth=0.03cm,linecolor=blue,linestyle=dashed,dash=0.17638889cm 0.10583334cm](6.264531,0.02078135)(6.264531,-1.9392186)
\usefont{T1}{ptm}{m}{n}
\rput(10.867032,0.02078125){time}
\usefont{T1}{ptm}{m}{n}
\rput(1.2617195,2.6207812){Controls used}
\psline[linewidth=0.02cm](1.2151595,-0.26239046)(1.2151595,0.26368266)
\usefont{T1}{ptm}{m}{n}
\rput(1.2135936,-0.47921875){$t = 0$}
\psline[linewidth=0.02cm,arrowsize=0.05291667cm 2.0,arrowlength=1.4,arrowinset=0.4]{<->}(1.1845312,-1.4592186)(3.784531,-1.4592186)
\usefont{T1}{ptm}{m}{n}
\rput(2.4110944,-1.8392187){smoothing}
\psline[linewidth=0.02cm,arrowsize=0.05291667cm 2.0,arrowlength=1.4,arrowinset=0.4]{<->}(6.424531,1.1607814)(9.024531,1.1607814)
\usefont{T1}{ptm}{m}{n}
\rput(7.692032,1.5807812){dissipation}
\psline[linewidth=0.02cm,arrowsize=0.05291667cm 2.0,arrowlength=1.4,arrowinset=0.4]{<->}(3.9645312,0.68421865)(5.6445312,0.68421865)
\usefont{T1}{ptm}{m}{n}
\rput(4.741094,0.98078126){settling}
\psline[linewidth=0.02cm,arrowsize=0.05291667cm 2.0,arrowlength=1.4,arrowinset=0.4]{<->}(5.6445312,-2.4592185)(6.304531,-2.4592185)
\usefont{T1}{ptm}{m}{n}
\rput(5.9292197,-2.8192186){push-down}
\end{pspicture} 
}
\caption{Approximate null-controllability strategy.}
\end{figure}

\skipline

\textbf{Smoothing of the initial data:}\quad
First, for $t \in [0, T_1]$, we choose $u(t) = v(t) = 0$. The system evolves 
freely. Regularization effects of the Burgers' equation smooth our initial 
data $y_0 \in L^2(0,1)$. We have $y(T_1, \cdot) \in H^1_0(0,1)$. There are many 
ways to prove such a result. For instance, one can take the Cole-Hopf transform 
and use well-known regularization properties of the heat equation.

\skipline

\textbf{Settling of the boundary layer:}\quad
Next, for $t \in [T_1, T_2]$, we perform the scaling~(\ref{scaling}). We want to
apply Lemma~\ref{lemma.settling} for a duration $1$. Hence, let us choose some 
$H$ such that $H - 2 > 0$. We take $\bar{v} \in H^{3/4}(0,T)$ the control from 
Lemma~\ref{lemma.settling}. For $t \in [T_1, T_2]$, we use:
\begin{eqnarray*}
 u(t) & = & 0, \\ 
 v(t) & = & \frac{1}{\varepsilon} \bar{v}\left(\frac{t-T_1}{\varepsilon}\right).
\end{eqnarray*}
From Lemma~\ref{lemma.settling}, we know that:
\begin{equation}
\left\| y(T_2, \cdot) - \frac{1}{\varepsilon}h_{\varepsilon}(\cdot) \right\|_{L^2(0,1)}
= \mathcal{O}_{\varepsilon \rightarrow 0} 
\left( \varepsilon^{-3/2} e^{-\frac{H}{4\varepsilon}\left(H - 2\right)} \right).
\label{estimate.finalproof}
\end{equation}

\skipline

\textbf{Push-down towards zero:}\quad
Then, still in the context of scaling~(\ref{scaling}), we want to apply 
Lemma~\ref{lemma.push} during a very short duration $\varepsilon^3$. Hence, for 
$t \in [T_2, T_3]$, we choose the controls found 
in Lemma~\ref{lemma.push} (with a total time $\varepsilon^3$), and we scale them
appropriately. That is to say:
\begin{eqnarray*}
 u(t) & = & \frac{1}{\varepsilon^2} \bar{u}\left(\frac{t-T_2}{\varepsilon}\right), \\ 
 v(t) & = & \frac{1}{\varepsilon} \bar{v}\left(\frac{t-T_2}{\varepsilon}\right).
\end{eqnarray*}
Combining (\ref{estimate.finalproof}) and Lemma~
\ref{lemma.push}, we get that, at the end of this hyperbolic stage:
\begin{equation*}
0 \geq y(T_3, \cdot) 
\geq \Phi^\varepsilon + \frac{1}{\varepsilon} \delta(\varepsilon^3, \cdot) 
- \frac{1}{\varepsilon} \left\| h^\varepsilon - k^\varepsilon \right\|_{\infty},
\end{equation*}
where (using estimate~(\ref{estimate.steady})):
\begin{equation*}
\left\| \delta(\varepsilon^3, \cdot) \right\|_{L^2} 
+ \left\| h^\varepsilon - k^\varepsilon \right\|_{\infty}  = 
\mathcal{O}_{\varepsilon \rightarrow 0}(\varepsilon^3).
\end{equation*}

\skipline

\textbf{Dissipation of the boundary residue:}\quad
Now we enter the passive stage. We choose $v(t) = u(t) = 0$ for $t\in [T_3, T]$.
Since $\varepsilon$ goes to zero, $T - T_3 \geq T/3$. Hence we can apply
Lemma~\ref{lemma.dissolution2} on a time interval independent of $\varepsilon$.
By using the comparison principle from Lemma~\ref{lemma.comparison} we can 
conclude that:
\begin{equation*}
\left\| y(T, \cdot) \right\|_{L^2} = 
\mathcal{O}_{\varepsilon \rightarrow 0}(\varepsilon^{1-\eta}),
\end{equation*}
for any $\eta > 0$. For instance, one can choose $\eta = \frac{1}{2}$. Then we 
choose $\varepsilon$ small enough to ensure that
$\left\| y(T, \cdot) \right\|_{L^2} \leq r$.
This concludes the proof of Theorem~\ref{thm.frederic2}.
\end{proof}

\begin{remark}
In the proof of Theorem~\ref{thm.frederic2}, we concatenate different
controls found in different parts. This could be a problem for smoothness 
because we did not check compatibility conditions at the jointures. However,
the proof provides a control $v \in H^{1/4}(0,T)$ and this doesn't require
compatibility conditions. If one wants smooth controls, it is also 
possible. One can choose a smooth control close to our control for the 
approximate controllability, then end with a smooth control for the exact
controllability.
\end{remark}

\section{Parabolic stage and exact local controllability}
\label{section.parabolic}

Theorem~\ref{thm.frederic2} takes care of the small time global approximate 
controllability towards the null state. To get Theorem~\ref{thm.frederic},
we need to combine it with a small time local exact controllability result 
in the vicinity of  the null state. We give in this section two different
approaches for this type of result.

\subsection{Fursikov and Imanuvilov's theorem}

The following theorem is due to Fursikov and Imanuvilov. Indeed, the techniques
they expose in their book~\cite{MR1406566} can be applied to show the following
result. However, the proof of this precise statement is not written, and one has
to work to show that the control can be chosen to be smooth.

\begin{theorem}
\label{thm.fursikov}
Let $T > 0$. There exists $r > 0$ such that, for any initial data 
$y_0 \in L^2(0,1)$ satisfying:
\begin{equation}
\label{petitesse}
\| y_0 \|_{L^2(0,1)} \leq r, 
\end{equation}
there exists a control $v \in \mathcal{C}^1[0,T]$ such that the solution 
$y \in X$ to the system:
\begin{equation}
\label{burgers_fursikov}
	\left\{
	\begin{array}{rll}
		y_t + y y_x - y_{xx} & = 0
		& \quad \mbox{in} \quad (0,T) \times (0,1), \\
		y(t, 0) & = v(t) &	\quad \mbox{in} \quad (0,T), \\
		y(t, 1) & = 0 &	\quad \mbox{in} \quad (0,T), \\
		y(0, x) & = y_0(x) & \quad \mbox{in} \quad (0,1),
	\end{array}
	\right.
\end{equation}
satisfies $y(T, \cdot) \equiv 0$.
\end{theorem}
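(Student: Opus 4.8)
The plan is to follow the route indicated by Fursikov and Imanuvilov in~\cite{MR1406566}: regard the quadratic term $yy_x$ as a forcing term, establish null controllability for the associated linear heat equation in suitably weighted spaces by means of a global Carleman estimate, and then close the nonlinear problem by a fixed point argument, a smallness of $\|y_0\|_{L^2}$ being exactly what makes the nonlinear perturbation absorbable.

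\textbf{Reduction to a distributed control and the linear problem.} First I would get rid of the boundary control at $x=0$ by enlarging the space interval: fix $\sigma>0$, work on $\tilde\Omega=(-\sigma,1)$ with homogeneous Dirichlet conditions at both endpoints and a distributed control supported in $\omega=(-\sigma,0)$, and extend $y_0$ by zero. If the extended equation can be driven to rest on $\tilde\Omega$, the restriction to $(0,1)$ of the resulting (interior-regular) solution solves~(\ref{burgers_fursikov}) with $v(t):=y(t,0)$. For the heat operator on $\tilde\Omega$ one uses the standard Carleman weights $\alpha(t,x),\xi(t,x)$ blowing up as $t\to T$, together with the global Carleman inequality for the adjoint equation observed on $(0,T)\times\omega$. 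This yields, for any source $f$ in a weighted space (roughly $e^{C/(T-t)}f\in L^2$), a control such that the controlled trajectory $y$ satisfies $y(T,\cdot)\equiv 0$ and, crucially, itself lies in a weighted parabolic space $Z$ (e.g.\ controlling $\|e^{C'/(T-t)}y\|$ in $L^2(H^2)\cap H^1(L^2)$), with a linear estimate in terms of $\|f\|$ and $\|y_0\|_{L^2}$.

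\textbf{Fixed point for the nonlinear equation.} Define $\Lambda:z\mapsto y$, where $y$ is the controlled trajectory built in the previous step for the source $f=-zz_x$. Since $z\mapsto zz_x$ is quadratic and the exponential weight is designed to absorb the loss coming from the nonlinear term, one shows that for $\|y_0\|_{L^2}\le r$ with $r$ small enough, $\Lambda$ maps a small ball of $Z$ into itself and is compact (or directly a contraction); Schauder's (or Banach's) theorem then gives a fixed point, which is a solution of~(\ref{burgers_fursikov}) with $y(T,\cdot)\equiv 0$ and control $v=y(\cdot,0)$.

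\textbf{Regularity of the control, and the main difficulty.} The delicate point --- the one the statement hides behind ``one has to work'' --- is upgrading $v$ to $\mathcal{C}^1[0,T]$. On $[0,T-\delta]$ this is interior parabolic smoothing: away from $t=T$ the solution is smooth in time with values in a smooth space, so $v\in\mathcal{C}^1[0,T-\delta]$. Near $t=T$ the weighted estimate forces $y$, and the derivatives the weight controls, to vanish faster than any power of $T-t$, so $v$ and $\dot v$ extend continuously by $0$ at $t=T$; matching with the smooth part yields $v\in\mathcal{C}^1[0,T]$. The real work is to make sure the Carleman weight is strong enough to dominate the derivatives of the nonlinearity needed for this bootstrap, which may require iterating the weighted linear estimate in spaces of increasing regularity (or choosing $\omega$ and the weights so that $v$ is constructed as a cutoff of a genuinely smooth function). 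I expect this regularity bootstrap near $t=T$ to be the main obstacle; the linear Carleman machinery and the fixed point are by now routine, whereas turning the $L^2$-type control into a $\mathcal{C}^1$ control while keeping $y(T,\cdot)\equiv 0$ \emph{exactly} is where the genuine effort lies.
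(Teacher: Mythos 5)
Your proposal is essentially a reconstruction of the Fursikov--Imanuvilov strategy (\cite{MR1406566}): Carleman estimate for the adjoint heat equation, weighted null controllability for the linearized problem with $yy_x$ treated as a source, a fixed point, and then a regularity bootstrap near $t=T$ to upgrade the trace $v=y(\cdot,0)$ to $\mathcal{C}^1$. That route is viable, but it is precisely the one the paper chooses \emph{not} to write out: the proof given here goes through the Cole--Hopf transform. Setting $z_0(x)=\exp\bigl(-\tfrac12\int_0^x y_0\bigr)-1$, the Burgers problem becomes a linear heat system with Dirichlet control $\alpha(t)$ at $x=0$ and Neumann condition at $x=1$, which is null controllable by the Fattorini--Russell moments method (\cite{MR0335014}) with controls as smooth as desired and with $\|\alpha\|_\infty\le C_T\|z_0\|_{L^2}$; smallness of $\|y_0\|_{L^2}$ then keeps $\|\alpha\|_\infty<1$, hence $1+z>0$ by the maximum principle, and $y=-2z_x/(1+z)$ is the desired trajectory with a $\mathcal{C}^1$ boundary control. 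The comparison is instructive: your approach is more robust (it does not use the integrable structure of Burgers, and it yields local controllability to arbitrary trajectories), but the two points you gloss over are exactly where the effort lies --- the fixed point in weighted parabolic spaces needs more than $L^2$ smallness of $y_0$ (one must either smooth first or work in $H^1$-type norms), and the $\mathcal{C}^1$ regularity of $v$ up to $t=T$ while keeping $y(T,\cdot)\equiv 0$ exactly requires a genuine iteration of the weighted estimates that is not in the literature in this form. The Cole--Hopf/moments proof buys both the smooth control and the exact final condition for free, at the price of being specific to the one-dimensional Burgers equation.
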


The full theorem is in fact more general since one obtains local exact 
controllability to the trajectories of system~(\ref{burgers_fursikov}). 
The proof relies on Carleman estimates for 
parabolic equations. It is an extension of a previous result with two boundary
controls whose proof can be read in~\cite{MR1348646}.

\subsection{Using Cole-Hopf and a moments method}

In this section we give a proof of Theorem~\ref{thm.fursikov} (both 
for the sake of completeness and for avoiding Carleman estimates). It relies on 
the Cole-Hopf transform and a moments method introduced in~\cite{MR0335014} by 
Fattorini and Russell.

\begin{proof}
Let $T > 0$. First, we consider the following heat system:
\begin{equation}
\label{syst.chaleur}
	\left\{
	\begin{array}{rll}
		z_t - z_{xx} & = 0
		& \quad \mbox{in} \quad (0,T) \times (0,1), \\
		z(t, 0) & = \alpha(t) & \quad \mbox{in} \quad (0,T), \\
		z_x(t, 1) & = 0 & \quad \mbox{in} \quad (0,T), \\
		z(0, x) & = z_0(x) & \quad \mbox{in} \quad (0,1).
	\end{array}
	\right.
\end{equation}
This is typically a setting for which we can apply the moments method of
Fattorini and Russel exposed in~\cite{MR0335014}. They prove this system is null
controllable for any positive time by means of very smooth controls. Let us use
some control $\alpha \in \mathcal{C}^1[0,T]$. They also prove that there exists 
some constant $C_T$ such that the size of the control is bounded from above by
$C_T \times \| z_0 \|_{L^2}$. Therefore, if $z_0$ is small enough in $L^2$,
one can steer it to zero with a control $\alpha(\cdot)$ such that 
$\|\alpha(\cdot)\|_{\infty} < 1$.

Now we get back to our Burgers' system. For $y_0 \in L^2(0,1)$, let us choose:
\begin{equation*}
z_0(x) = \exp\left(-\frac{1}{2}\int_0^1 y_0(s) ds\right) - 1.
\end{equation*}
Thus, if $y_0(\cdot)$ is small in $L^2(0,1)$ then $z_0(\cdot)$ too. If they are 
small enough, then we can steer $z_0$ to $0$ with a control such that 
$\|\alpha(\cdot)\|_{\infty} < 1$. In that setting, we have $z(\cdot) > -1$
thanks to the maximum principle for the heat equation.
Hence, if we let $y = - 2 z_x / (1 + z)$, we get a solution $y \in X$ to 
(\ref{burgers_fursikov}) such that $y(T, \cdot) \equiv 0$ provided that 
condition~(\ref{petitesse}) is satisfied for some $r > 0$ depending only on $T$.
\end{proof}

\section{Conclusion}

In our work, we want to underline two important ideas. The first one is the
rigorous analysis of the hyperbolic limit system and of the adequate weak 
boundary conditions. These weak boundary conditions somehow describe the
behavior of the boundary layer and what it will be able to do or not.
The second idea is the dissipation of the boundary layer by the fluid system
itself during the passive stage. Once a boundary layer is created, will the 
system be able to dissipate it in short time or not? 

These two ideas might be important for the analysis of more complex problems
such as the Navier-Stokes Open problem \ref{open.NS}. 
For instance, one could try to see if the boundary layer appearing in 
\cite{MR1393067} when trying to control the 2D Navier-Stokes system with Navier 
slip boundary conditions can be dissipated in small time by the system itself.

\skipline

The author would like to thank his advisor Jean-Michel Coron for having 
attracted his attention on this control problem, 
Claude Bardos, Sergio Guerrero, for fruitful discussions 
and Vincent Perrollaz for his advice concerning the hyperbolic system.


\bibliographystyle{plain}
\bibliography{Biblio-Burgers-BLC}


\end{document}